\numberwithin{equation}{section}
\theoremstyle{plain}
\newtheorem{theorem}{Theorem}[section]
\newtheorem{lemma}[theorem]{Lemma}
\newtheorem{proposition}[theorem]{Proposition}
\theoremstyle{definition}
\newtheorem{assumption}[theorem]{Assumption}
\newtheorem{remark}[theorem]{Remark}
\newtheorem{example}[theorem]{Example}
\newcommand{\E}{\mathbb{E}}
\newcommand{\ud}{\ensuremath{\mathrm{d} }}
\newcommand{\calF}{\mathcal{F}}
\newcommand{\calN}{\mathcal{N}}
\newcommand{\calS}{\mathcal{S}}
\newcommand{\bbN}{\mathbb{N}}
\newcommand{\R}{\mathbb{R}}
\DeclareMathOperator{\Lip}{\mathit{L}}
\title{On weak convergence of stochastic wave equation with colored noise on $\R$}
\author{Wenxuan Tao\thanks{Email address: wxt399@student.bham.ac.uk}}
\affil{School of Mathematics, University of Birmingham,
Birmingham,
B15 2TT,
United Kingdom}
\date{}
\begin{document}
\maketitle
\begin{abstract}
In this paper, we study the following stochastic wave equation on the real line $\partial_t^2 u_{\alpha}=\partial_x^2 u_{\alpha}+b\left(u_\alpha\right)+\sigma\left(u_\alpha\right)\eta_{\alpha}$. The noise $\eta_\alpha$ is white in time and colored in space with a covariance structure $\E[\eta_\alpha(t,x)\eta_\alpha(s,y)]=\delta(t-s)f_\alpha(x-y)$ where $f_\alpha$ is continuous with respect to $\alpha$ in Fourier mode, see assumption~\ref{assumption on f}. We prove the continuity of the probability measure induced by the solution $u_\alpha$, in terms of $\alpha$, with respect to the convergence in law in the topology of continuous functions with uniform metric on compact sets. We also give several examples of $f_\alpha$ such that our theorem applies to.
\end{abstract}
\vspace{10mm}

        \noindent{\it \noindent }MSC 2020: 60B10; 60H15\\
	\noindent{\it Keywords.} Stochastic wave equation; finite-dimensional convergence; tightness; convergence in law\\

%

\setlength{\parindent}{1.5em}


\section{Introduction}
In this paper, we consider the stochastic wave equation on $[0,\infty)\times \R$ driven by Gaussian noise $\eta_\alpha$ which is white in time and colored in space 
\begin{equation}\label{colored SWE}
    \left\{
        \begin{aligned}
            &\frac{\partial^2 u_{\alpha}}{\partial t^2}(t,x)=\frac{\partial^2 u_{\alpha}}{\partial x^2}(t,x)+b\left(u_\alpha(t,x)\right)+\sigma\left(u_\alpha(t,x)\right)\eta_{\alpha}\,,\\
            &u_\alpha(0,x)=u_0(x)\,,\quad x\in \R\,,\\
            &\frac{\partial u_\alpha}{\partial t}(0,x)=v_0(x)\,,\quad x\in \R\,,
        \end{aligned}
    \right.
\end{equation}
where $b$ and $\sigma$ are Lipschitz functions with Lipschitz constant $\Lip_b$ and $\Lip_\sigma$, respectively. The initial value $u_0$ is assumed to be bounded and Lipschitz. And $v_0$ is assumed to be measurable and bounded.
The colored noise $\eta_\alpha$ has the covariance structure
\begin{equation}\label{covariance}
    \E[\eta_\alpha(t,x)\eta_\alpha(s,y)]=\delta(t-s)f_\alpha(x-y)\,,
\end{equation}
where $f_\alpha$ is a covariance function satisfying Assumption~\ref{assumption on f} with $\alpha\in I\subset\R$ for certain subset of $\R$.
\begin{remark}
    If $f_\alpha(x)$ is given by the Dirac delta function $\delta(x)$, the equation is 
 driven by space-time white noise.
\end{remark}
We denote $\mathcal{F}f(\xi)=\hat{f}(\xi)$  the Fourier transform of $f$, in the following sense (see for example \cite[Section 2]{Dalang1999EJP}). For any function $f\in L^1(\R)$, we define the Fourier transform of $\phi$ as
\begin{equation}\label{fourier transform}
    \calF(f)(\xi):=\int_\R f(x)e^{- i x\xi}\ud x\,.
\end{equation}
Then, the Fourier transform can be extended to the space of tempered distribution via duality. The Fourier transform $\hat{f_\alpha}$ is the function such that the following holds, for all functions $\phi$ in the Schwartz space $\calS(\R)$
\begin{align*}
    \int_\R \phi(x)\overline{\hat{f_\alpha}(x)}\ud x= \frac{1}{2\pi}\int_\R \hat{\phi}(\xi)\overline{f_\alpha(\xi)}\ud \xi\,,
\end{align*}
where $\overline{z}$ denotes the complex conjugate of $z$.
We may easily see from this relation that for any $\phi, \psi \in \calS(\R)$,
\begin{equation}\label{fourier mode schwartz}
        \int_\R\int_\R \phi(x)f_\alpha(x-y)\psi(y)\ud x\ud y=\int_\R f_\alpha(x)(\phi\ast \Tilde{\psi})(x)\ud x=\frac{1}{2\pi}\int_\R \hat{f_\alpha}(\xi)\hat{\phi}(\xi)\overline{\hat{\psi}(\xi)}\ud \xi\,,
\end{equation}
where $\tilde{\psi}(x)=\psi (-x)$.
 The covariance functions $\{f_\alpha\}$ are naturally even and non-negative. Now we make the following additional assumptions on $f_\alpha$.
\begin{assumption}\label{assumption on f}
    \begin{enumerate}
        \item For some $\alpha_0\in \overline{I}$, $\hat{f}_\alpha(\xi)\to \hat{f}_{\alpha_0}(\xi)$ in $L^1_{loc}(\R)$ as $\alpha\to \alpha_0$, where $\overline{I}$ denotes the closure of $I$. Here if $\alpha_0\in\overline{I}\setminus I$, we assume the limit exists and denote it by $\hat{f}_{\alpha_0}$.
        \item 
        There exists $\omega>0$ such that $\hat{f}_\alpha(\xi)$ is uniformly bounded in $\alpha$ and converges to $\hat{f}_{\alpha_0}(\xi)$ point wise on $\R\setminus[-\omega,\omega]$, as $\alpha$ goes to $\alpha_0$.
        \item For each $\alpha\in I\bigcup\{\alpha_0\}$, $f_\alpha=h_\alpha\ast h_\alpha$ for certain real function $h_\alpha$. As a result, $\hat{f}_\alpha=(\hat{h}_\alpha)^2$.
        \item We also assume that there exists $0<\beta<1$ such that
        \begin{equation}\label{improved dalang's condition}
            \sup_{\alpha\in I\bigcup\{\alpha_0\}}\int_\R \frac{\hat{f_\alpha}(\xi)}{(1+|\xi|^2)^{1-\beta}}\ud \xi<\infty\,.
        \end{equation}
    \end{enumerate}
\end{assumption}
\begin{remark}
    \begin{itemize}
        \item Assumptions 1 and 2 indicate $f_\alpha$ converges to $f$ in the sense of Schwartz distribution. And the same convergence holds for $\hat{h}_\alpha=\sqrt{\hat{f}_\alpha}\to \hat{h}_{\alpha_0}$. 
        \item Assumption 3 can be easily fulfilled by any classical kernels which satisfy semigroup property.
        \item Assumption 4 is an improved Dalang's condition \cite[Section 4.4.4]{dalang2011expositiones}, which ensures the well-posedness and H\"{o}lder regularity of the mild solution (see the definition in Section 2).
    \end{itemize}
\end{remark}
\begin{example}\label{examples}
Here we list some examples of the covariance functions $f_\alpha$. For the sake of completeness, an explicit justification of how these examples fit into our assumptions is left in Section~\ref{Validation of the examples}.
    \begin{enumerate}
        \item Riesz kernels (see e.g. \cite[Example 1]{Dalang1999EJP})
        
        Let $g_\alpha(x)=\frac{1}{|x|^\alpha}$ for $\alpha\in (0,1)$ and define $f_\alpha$ to be the Riesz kernel
        \begin{align}\label{Riesz kernels}
            f_\alpha(x)=c_{1-\alpha}g_\alpha(x),
        \end{align}
        where $c_\alpha=\frac{\sin\left(\frac{\alpha\pi}{2}\right)\Gamma(1-\alpha)}{\pi}$. 
        \item Heat kernels
        
        For $\alpha>0$, define
        \begin{align}\label{Heat kernels}
            f_\alpha(x)=\frac{1}{\sqrt{2\pi\alpha}}e^{-\frac{|x|^2}{2\alpha}}\,.
        \end{align}
        \item Bessel kernels
        
        For $\alpha>0$, define
        \begin{align}\label{Bessel kernels}
            f_\alpha(x)=\frac{1}{(4\pi)^{\alpha/2}\Gamma(\alpha/2)}\int_0^\infty \frac{e^{-\frac{\pi|x|^2}{y}}-\frac{y}{4\pi}}{y^{1+\frac{1-\alpha}{2}}}\ud y\,.
        \end{align}
        \item For any $\rho\in L^1(\R)\bigcap L^2(\R)$ with $\|\rho\|_{L^1(\R)}=1$, define $\rho_\alpha(x)=\frac{1}{\alpha}\rho(\frac{x}{\alpha})$ for $\alpha>0$ and
        \begin{align}\label{general example}
            f_\alpha=\rho_\alpha\ast \rho_\alpha\,.
        \end{align}
    \end{enumerate}
\end{example}

A rigorous definition of the noise $\eta_\alpha$ is introduced in Section~\ref{preliminaries}.
In this paper, we study the solution $u_\alpha$ of \eqref{colored SWE} as a function of $\alpha$ and prove the following main result.
\begin{theorem}\label{main theorem}
    Let $\mathcal{C}=C\left(\left[0,T\right]\times\R\right)$ denote the continuous functions on $[0,T]\times \R$, endowed with the supremum norm on compact set. Then for any sequence $\{\alpha_n\}_{n=1}^\infty\subset I$  such that $\lim_{n\xrightarrow{}\infty}\alpha_n=\alpha_0$, $u_{\alpha_n}$ converges to $u_{\alpha_0}$ in law in the space $\mathcal{C}$ as $n\xrightarrow{} \infty$.
\end{theorem}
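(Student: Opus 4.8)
The plan is to prove tightness of the laws $\{\mu_{\alpha_n}\}$ on $\mathcal{C}$ and to identify the limit via convergence of the driving noises and a stability estimate for the mild-solution map. Throughout, $u_\alpha$ is the mild solution satisfying, for the wave kernel $G_t(x)=\tfrac12\one_{\{|x|<t\}}$,
\begin{equation*}
u_\alpha(t,x)=w(t,x)+\int_0^t\!\!\int_\R G_{t-s}(x-y)\,b(u_\alpha(s,y))\,\ud y\,\ud s+\int_0^t\!\!\int_\R G_{t-s}(x-y)\,\sigma(u_\alpha(s,y))\,\eta_\alpha(\ud s,\ud y),
\end{equation*}
where $w$ is the (deterministic) solution of the homogeneous wave equation with data $u_0,v_0$, and likewise for $u$ driven by $\eta$.

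First I would establish \emph{uniform moment bounds}: for every $p\ge 2$ and $T>0$, $\sup_{n}\sup_{(t,x)\in[0,T]\times\R}\E|u_{\alpha_n}(t,x)|^p<\infty$, together with the Hölder-type increment bound
\begin{equation*}
\E\bigl|u_{\alpha_n}(t,x)-u_{\alpha_n}(s,y)\bigr|^p\le C_{p,T}\bigl(|t-s|+|x-y|\bigr)^{\gamma p}
\end{equation*}
for some $\gamma\in(0,1)$ and $C_{p,T}$ \emph{independent of $n$}. This is the technical heart of the matter. The key point is that the stochastic-integral variance involves $\int_0^t\!\!\int_\R |\calF G_{t-s}(\cdot)(\xi)|^2 g_{1-\alpha_n}(\xi)\,\ud\xi\,\ud s$ via \eqref{fourier mode schwartz}, with $g_{1-\alpha_n}(\xi)=|\xi|^{\alpha_n-1}$ and $\calF G_r(\xi)=\sin(r|\xi|)/|\xi|$; one checks by a scaling/dominated-convergence argument that $\int_0^t\!\!\int_\R |\sin((t-s)|\xi|)|^2|\xi|^{-2}|\xi|^{\alpha_n-1}\,\ud\xi\,\ud s$ is bounded uniformly in $n$ for $\alpha_n$ near $1$ (it converges to the white-noise value $\int_0^t\!\!\int_\R |\sin(r|\xi|)|^2|\xi|^{-2}\,\ud\xi\,\ud r$, finite in spatial dimension one), and similarly for the shifted kernels $G_{t-s}(x-\cdot)-G_{t'-s}(x'-\cdot)$ controlling increments. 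Plugging these into a Burkholder--Davis--Gundy estimate and closing a Gronwall loop gives the bounds with $n$-uniform constants; the increment bound plus Kolmogorov's continuity criterion then yields tightness of $\{\mu_{\alpha_n}\}$ in $\mathcal{C}$.

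Next I would \emph{identify every subsequential limit} as the law of $u$. Realize all noises on one probability space: since $f_{\alpha_n}\to\delta$ as $\alpha_n\to1$ in the sense that $\tfrac1{2\pi}\int_\R g_{1-\alpha_n}(\xi)|\calF\varphi(\xi)|^2\,\ud\xi\to\tfrac1{2\pi}\int_\R|\calF\varphi(\xi)|^2\,\ud\xi=\|\varphi\|_{L^2}^2$ for $\varphi\in\calS(\R)$, the Gaussian noises $\eta_{\alpha_n}$ converge to the white noise $\eta$ in an appropriate sense (e.g. the associated isonormal Gaussian processes converge, and one can couple them so that $\int G\,\sigma(u)\,\eta_{\alpha_n}\to\int G\,\sigma(u)\,\eta$ in $L^2$ whenever the integrand is fixed). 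Then I would run a Picard/fixed-point comparison: writing $D_n:=\E\sup|u_{\alpha_n}-u|^2$ on a suitable truncation, split the difference into (i) the drift term, handled by Lipschitz continuity of $b$ and Gronwall, (ii) the stochastic term with the \emph{same} integrand $\sigma(u)$ but different noises — controlled by the noise convergence above — and (iii) the stochastic term difference $\int G(\sigma(u_{\alpha_n})-\sigma(u))\,\eta_{\alpha_n}$, bounded using $\Lip_\sigma$, the uniform covariance bound from Step~1, and Gronwall. This shows $u_{\alpha_n}\to u$ in probability (hence in law) along the full sequence, which combined with tightness upgrades to convergence in law in $\mathcal{C}$; alternatively, on the canonical space one shows any weak limit point solves the same martingale problem / mild equation with noise $\eta$, and appeals to uniqueness for \eqref{white SWE}.

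The main obstacle I anticipate is Step~1: obtaining moment and increment estimates with constants that do not blow up as $\alpha_n\uparrow1$, i.e. a genuinely uniform control of the spectral integrals $\int_0^t\!\!\int_\R|\calF G_{t-s}(\xi)|^2|\xi|^{\alpha_n-1}\,\ud\xi\,\ud s$ and their increment analogues. Near $\xi=0$ the weight $|\xi|^{\alpha_n-1}$ is more singular than the white-noise weight $1$, so one must exploit the vanishing $|\calF G_r(\xi)|^2\sim r^2$ as $\xi\to0$; near $\xi=\infty$ the kernel decay $|\xi|^{-2}$ dominates and the extra $|\xi|^{\alpha_n-1}\to|\xi|^0$ is harmless. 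A clean way is to prove $\sup_{\alpha\in[\alpha_0,1)}\int_0^t\!\!\int_\R|\calF G_{t-s}(\xi)|^2|\xi|^{\alpha-1}\,\ud\xi\,\ud s\le C_t<\infty$ by splitting $|\xi|\le1$ and $|\xi|>1$ and using, respectively, $|\sin(r|\xi|)|\le r|\xi|$ and $|\sin(r|\xi|)|\le1$; the increment versions require the standard but slightly delicate wave-kernel estimates (as in Dalang--Sanz-Solé), and the care is to keep every constant $\alpha$-uniform. Once that analytic input is in place, the probabilistic arguments are routine.
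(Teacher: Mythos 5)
Your overall strategy is sound and the tightness half coincides with the paper's: uniform-in-$\alpha$ moment bounds and H\"older-type increment estimates obtained from the spectral integrals $\int_0^t\int_\R|\calF G_{t-s}(\xi)|^2|\xi|^{\alpha-1}\ud\xi\,\ud s$ (the paper computes these explicitly, $=t^{3-\alpha}2^{2-\alpha}A_{\alpha-1}/(3-\alpha)$ with $A_{\alpha-1}$ bounded for $\alpha\geq\alpha_b$, and does the increment analogues by direct kernel computations in the Appendix), followed by the Kolmogorov-type tightness criterion. Where you genuinely diverge is the identification of the limit. The paper does \emph{not} compare $u_\alpha$ with $u$ directly; it couples the noises via $B_\alpha=B\ast h_\alpha$ with $h_\alpha\ast h_\alpha=f_\alpha$, runs the Picard iterations $u_\alpha^{(n)},u^{(n)}$ in the weighted norm $\calN_{\gamma,k}$, isolates the ``same integrand, different noise'' error $\int[G\sigma(u^{(n)})]\ast(h_\alpha-\delta)\,\eta$, and then has to manage a double limit in $n$ and $\alpha$ (the splitting of $I_5$ and the choice of $\alpha^*\vee\alpha^{**}$) before letting $n\to\infty$. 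Your route replaces this by a direct stability/Gronwall argument on $u_{\alpha_n}-u$, with the noise-difference term carrying the fixed, $\alpha$-independent integrand $\sigma(u)$; since $u$ is itself independent of $\alpha$, the dominated-convergence argument in Fourier space applies to it just as it does to the iterates, and you avoid the $n$-versus-$\alpha$ bookkeeping entirely, at the price of needing the same regularity (continuity, uniform moments) for $u$ that the paper anyway establishes. This is a legitimate and arguably cleaner alternative; what it buys is a one-step Gronwall closure, while the paper's Picard scheme buys integrands whose continuity and $\alpha$-independence are immediate by construction. Two small repairs you should make when writing it up: the coupling must be the explicit spatial smoothing above (mere weak convergence of the covariances would not give your $L^2$ convergence of stochastic integrals with a fixed integrand), which you gesture at but should state; and your quantity $D_n=\E\sup|u_{\alpha_n}-u|^2$ should be $\sup_{x}\E|u_{\alpha_n}(t,x)-u(t,x)|^2$ (or the weighted norm $\calN_{\gamma,2}$), since supremum moments are not available from the standard theory and are not needed for the Gronwall step or for convergence of finite-dimensional distributions.
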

\begin{remark}\label{key idea}
    We can see the key idea of this paper from the first example. Traditionally, in the space domain, $f_\alpha$ given by \eqref{Riesz kernels} converges point wise to $0$ as $\alpha\to 1$. But in the frequency domain, $\hat{f}_\alpha(\xi)=1/|\xi|^{1-\alpha}$ converges to the Fourier transform of Dirac delta function $\hat{\delta}\equiv 1$. See Section~\ref{Validation of the examples} for more details.
\end{remark}
Recently, researchers have begun investigating similar convergence in law properties in the context of stochastic partial differential equations (SPDEs). For instance, a weak convergence result in the setting of the stochastic heat equation was studied in \cite{BEZDEK2016SPA} by Pavel Bezdek. Giordano et al. \cite{Giordano2020Bernoulli} explored the weak convergence with respect to the Hurst index of both stochastic heat and wave equations with fractional additive noise $\sigma(x)=1$. One key point they have proven is the continuity of the solution operator in the heat equation case with bounded drift $b$. The authors of \cite{giordano2020SPA} addressed the same kind of results in Parabolic and Hyperbolic Anderson Models $\sigma(x)=x$, utilizing the method of chaos expansion. 
In the most recent paper \cite{balan2023continuity}, Balan and Liang studied Parabolic and Hyperbolic Anderson Models driven by colored noise in both space and time, where the solution is understood in Skorohod sense. They discussed two cases. In the regular case, the spatial covariance is given by the Riesz kernel. In the rough case, the spatial noise is given by a fractional Brownian motion with Hurst index $0<H<\frac{1}{2}$. We establish our result following the approach of Jolis and Viles \cite{Jolis2007ContinuityIL}, who combine the finite dimensional convergence and tightness to prove the convergence in law of the local time of fractional Brownian motion with respect to the Hurst parameter. Additionally, we build upon the work of Wu and Xiao \cite{wu2009continuity}, who extended these results to higher dimensions and the case of multiple independent copies of fractional Brownian motion.

The novelty of this paper lies in two key aspects. First, we consider a generalized diffusion factor 
$\sigma(u)$, extending beyond the additive noise studied in \cite{Giordano2020Bernoulli} and the multiplicative noise in \cite{giordano2020SPA}. Second, and more importantly, we generalize the covariance function $f_\alpha$. In our framework, the noise is no longer restricted to being generated by fractional Brownian motion or structured according to the Riesz kernel as discussed before. Instead, these cases are encompassed as special instances, as illustrated in Example~\ref{examples}. Moreover, such generalization on $f_\alpha$ reveals the key point of the main theorem. See Remark~\ref{key idea}.

The proof of the main theorem develops as follows. In Section~\ref{preliminaries}, we introduce the rigorous definition of the noises and the method of coupling. We also provide the existence and uniqueness of solutions to \eqref{colored SWE} based on the result from \cite{dalang2011expositiones} as well as the idea of mild solution. In the next section~\ref{Holder regularity} we prove the H\"{o}lder continuity of the process $u_\alpha(t,x)$. In Section~\ref{Finite dimensional convergence}, we prove the convergence of finite dimensional distributions of $u_\alpha$ to that of $u$.  Finally, we combine these results to derive the main theorem~\ref{main theorem}. We end this paper in Section~\ref{Validation of the examples} by verifying the examples given in~\ref{examples} with respect to the assumptions~\ref{assumption on f}. Throughout this paper, $C$ will denote a generic constant which may change from line to line. We fix a positive number $T$ as the maximal time being considered.
\section{Preliminaries}\label{preliminaries}
\subsection{ A rigorous definition of the noises}\label{definition of the noise}

To be precise, $\eta_\alpha$ and the white noise $\eta$ defined rigorously as follows. Given a complete probability space $(\Omega, \mathcal{F},\mathbb{P})$, the set of noises ${\eta_\alpha}$ is defined from a family of centered Gaussian random functionals $B_\alpha$ defined on $\mathcal{D}$, where $\mathcal{D}=C_c^\infty([0,T]\times \R)$ refers to the space of infinitely differentiable functions with compact support. For any $\phi,\psi \in \mathcal{D}$, the covariance
\begin{align*}
    &\E\left[B_\alpha(\phi)B_\alpha(\psi)\right]=\int_0^T\int_\R \int_\R \phi(s,x)\psi(s,y)f_\alpha(x-y)\ud y\ud x\ud s\,,\\
    &\E\left[B(\phi)B(\psi)\right]=\int_0^T \int_\R \phi(s,x)\psi(s,x)\ud x\ud s\,.
\end{align*}
Now, with an abuse of notation with suitable approximation arguments,, we define for any bounded Borel set $A$,
\begin{align*}
    &B_\alpha([0,t]\times A):=B_\alpha(\mathds{1}_{[0,t]\times A})\,,\\
    &B([0,t]\times A):=B(\mathds{1}_{[0,t]\times A})\,,\\
\end{align*}
and another abuse of notation,
\begin{align*}
    B_\alpha(t,x):=\left\{
    \begin{aligned}
        &B_\alpha(\mathds{1}_{[0,t]\times [0,x]}),\quad\quad&&\text{for}\, x\geq0\,,\\
        &B_\alpha(-\mathds{1}_{[0,t]\times [x,0]}),\quad&&\text{for}\, x<0\,,
    \end{aligned}
    \right.
\end{align*}
and
\begin{align*}
    B(t,x):=\left\{
    \begin{aligned}
        &B(\mathds{1}_{[0,t]\times [0,x]}),\quad\quad&&\text{for}\, x\geq0\,,\\
        &B(-\mathds{1}_{[0,t]\times [x,0]}),\quad&&\text{for}\, x<0\,.
    \end{aligned}
    \right.
\end{align*}
The colored noise $\eta_\alpha$ and white noise $\eta$ are then defined as
\begin{equation}\label{B and eta}
    \begin{aligned}
        &\eta_\alpha(t,x)=\frac{\partial^2 B_\alpha}{\partial t\partial x}(t,x)\,,\\
    &\eta(t,x)=\frac{\partial^2 B}{\partial t\partial x}(t,x)\,.
    \end{aligned}
\end{equation}
Let $G_t(x)$ denote the wave kernel
\begin{equation}\label{wave kernel}
    G_t(x)=\frac{1}{2}\mathds{1}_{(|x|<t)}\,,
\end{equation}
and let $I_0$ denote the deterministic linear wave solution,
\begin{equation}
    I_0(t,x)=\frac{1}{2}\int_{x-t}^{x+t}v_0(y)\ud y+\frac{1}{2}\left(u_0(x+t)+u_0(x-t)\right)\,.
\end{equation}
The Fourier transform of spatial covariance function $\hat{f}_\alpha$ defines a measure $\mu_\alpha(\ud \xi)=\hat{f}_\alpha(\xi)\ud \xi$, which is called the spectrum measure. As a result of \eqref{improved dalang's condition},
we can check the Dalang's condition \cite[Section 4.4.4]{dalang2011expositiones}
\begin{equation}
    \int_\R \frac{\mu_\alpha(\ud \xi)}{1+\xi^2}=\int_\R \frac{\hat{f_\alpha}(\xi)}{(1+|\xi|^2)^{1-\beta}}\frac{1}{(1+|\xi|^2)^\beta}\ud \xi<\infty\,,
\end{equation}
which ensures the existence and uniqueness of adapted mild random field solutions in the 
 following form with finite second moment 
\begin{equation}\label{mild solution}
    \begin{aligned}
        &u_\alpha(t,x)=I_0(t,x)+\int_0^t\int_\R G_{t-s}(x-y)b(u_\alpha(s,y))\ud y\ud s\\
        &\qquad\qquad+\int_0^t\int_\R G_{t-s}(x-y)\sigma(u_\alpha(s,y))\eta_\alpha(\ud s,\ud y)\,,\\
    \end{aligned}
\end{equation}
where the stochastic integral is in the sense of Dalang-Walsh \cite[Section 2]{Dalang1999EJP}.
\subsection{The method of coupling}
The method of coupling helps us to set the whole family of colored noises $\{\eta_\alpha\}$ in a single probability space $(\Omega,\calF, \mathbb{P})$ which is also the probability space in which the white noise $\eta$ is defined. By coupling, we can define the colored noises $B_\alpha$ as follows: for any bounded Borel set $A\subset \R$
\begin{equation}\label{coupling}
    B_\alpha([0,t]\times A)=\int_0^t\int_\R(\mathds{1}_A\ast h_\alpha)(x)B(\ud s,\ud x)\,.
\end{equation}
We can check that the resulting $B_\alpha$ is exactly the one we defined in \eqref{covariance} since
\begin{equation}
    f_\alpha(x)=(h_\alpha\ast h_\alpha)(x)\,.
\end{equation}
From the construction of Dalang-Walsh integral, the coupling relation \eqref{coupling} implies that for any $t\in [0,T]$ and any adapted random field $X(t,x)$ such that
\begin{equation}\label{coupling condition}
    \E\int_0^t\int_\R\int_\R |X(s,x)X(s,y)|f_\alpha(x-y)\ud x\ud y\ud s<\infty,
\end{equation}
we have
\begin{equation}\label{coupling result}
    \int_0^t\int_\R X(s,y)\eta_\alpha(\ud s,\ud y)=\int_0^t\int_\R (X(s,\cdot)\ast h_\alpha)(y)\eta(\ud s,\ud y)\,.
\end{equation}

We also refer the reader to \cite[Section 2.2]{giordano2020SPA} for more details of coupling.

{\section{Proof of the main theorem}}
\subsection{H\"{o}lder continuity}\label{Holder regularity}
For any random variable $u$, we denote
\begin{align*}
    \|u\|_k:=\|u\|_{L^k(\Omega)}=(\E|u|^k)^{\frac{1}{k}}\,.
\end{align*}
And a space time norm:
\begin{align*}
    \calN_{\gamma,k}(u)=\sup_{s\in [0,T]}\sup_{y\in \R}\left(e^{-\gamma s}\|u(s,y)\|_{L^k(\Omega)}\right)\,,\quad \gamma>1,k\geq 2\,.
\end{align*}
We introduce some useful lemmas. The first lemma is a extension of the relation \eqref{fourier mode schwartz} to more general functions.
\begin{lemma}\label{fourier mode}
    If $f$ is lower semicontinuous, then for all Borel finite measures $\mu$ on $\R$,
    \begin{equation}
        \int_\R\int_\R f(x-y)\mu(\ud x)\mu(\ud y)=\frac{1}{2\pi}\int_\R|\hat{\mu}(\xi)|^2\hat{f}(\xi)\ud \xi\,.
    \end{equation}
\end{lemma}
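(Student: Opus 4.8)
The plan is to deduce the identity from its Schwartz-function version \eqref{fourier mode schwartz} by approximating both the measure $\mu$ and the kernel $f$, and then passing to the limit using positivity in place of integrability. I take $f\ge 0$ throughout (the relevant case, e.g.\ $f=f_\alpha$, for which $\widehat f=g_{1-\alpha}\ge 0$); then the left-hand side is a well-defined element of $[0,\infty]$ and monotone convergence is available on both sides.

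First I would fix the Gaussian mollifier $p_\varepsilon(x)=(2\pi\varepsilon)^{-1/2}e^{-x^2/2\varepsilon}$ and set $\phi_{\varepsilon,R}:=(\mu|_{B_R})\ast p_\varepsilon$, which is a Schwartz function with $\calF\phi_{\varepsilon,R}(\xi)=\widehat{\mu|_{B_R}}(\xi)\,e^{-\varepsilon\xi^2/2}$. Applying the Schwartz-function identity \eqref{fourier mode schwartz} (legitimate here because $\widehat f$ is a locally integrable function) with $\phi=\psi=\phi_{\varepsilon,R}$ and carrying out the change of variables $x\mapsto x-u$, $y\mapsto y-v$ turns the statement into
\[
\int_\R\int_\R (f\ast p_{2\varepsilon})(u-v)\,\mu|_{B_R}(\ud u)\,\mu|_{B_R}(\ud v)=\frac1{2\pi}\int_\R \widehat f(\xi)\,\bigl|\widehat{\mu|_{B_R}}(\xi)\bigr|^2 e^{-\varepsilon\xi^2}\,\ud\xi .
\]

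Next I would remove the two approximations. With $\varepsilon$ fixed, letting $R\to\infty$ both sides converge by dominated convergence: the spectral integrand is dominated by $\widehat f(\xi)\,\mu(\R)^2 e^{-\varepsilon\xi^2}\in L^1(\R)$ and $\mu|_{B_R}\to\mu$ in total variation, so the displayed identity holds with $\mu|_{B_R}$ replaced by $\mu$. Then I let $\varepsilon\downarrow 0$. On the spectral side $|\widehat\mu(\xi)|^2 e^{-\varepsilon\xi^2}\uparrow|\widehat\mu(\xi)|^2$, so by monotone convergence it tends to $\frac1{2\pi}\int_\R|\widehat\mu|^2\widehat f$. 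On the physical side, lower semicontinuity of $f$ gives $\liminf_{\varepsilon\downarrow 0}(f\ast p_{2\varepsilon})(z)\ge f(z)$ for \emph{every} $z$, so Fatou's lemma already yields ``$\ge$'' in the claimed identity; for the reverse inequality I would use the subordination formula $f_\alpha(x)=c\int_0^\infty s^{-(1+\alpha)/2}p_s(x)\,\ud s$, which gives $f_\alpha\ast p_{2\varepsilon}=c\int_0^\infty s^{-(1+\alpha)/2}p_{s+2\varepsilon}\,\ud s$, together with the fact that $t\mapsto\int_\R\int_\R p_t(x-y)\mu(\ud x)\mu(\ud y)$ is nonincreasing (visible from its spectral representation $\tfrac1{2\pi}\int_\R|\widehat\mu|^2 e^{-t\xi^2/2}\,\ud\xi$); hence $\int\int (f_\alpha\ast p_{2\varepsilon})(x-y)\,\mu(\ud x)\mu(\ud y)\le\int\int f_\alpha(x-y)\,\mu(\ud x)\mu(\ud y)$ for every $\varepsilon>0$, which closes the argument. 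A general lower semicontinuous $f$ of this type is then handled by writing it as the increasing limit of functions to which the above applies and invoking monotone convergence on both sides once more.

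The step I expect to be the main obstacle is exactly the passage $\varepsilon\downarrow 0$, i.e.\ upgrading from a Schwartz density to an arbitrary finite measure: the spectral side integrates the merely bounded, non-decaying function $|\widehat\mu|^2$ against the non-integrable weight $\widehat f=g_{1-\alpha}$, so no dominated-convergence bound is available, while on the physical side $f_\alpha\ast p_{2\varepsilon}$ does not converge to $f_\alpha$ monotonically at a fixed point. The point — and the reason lower semicontinuity is the natural hypothesis — is that monotonicity can be recovered after integrating against $\mu\otimes\mu$: the Gaussian factor $e^{-\varepsilon\xi^2}\uparrow 1$ makes the spectral side monotone, and subordination together with monotonicity of the ``heat energy'' $t\mapsto\int\int p_t(x-y)\,\mu(\ud x)\mu(\ud y)$ does the same on the physical side, so that both limits fall to monotone convergence.
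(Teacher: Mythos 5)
Your argument takes a genuinely different route from the paper's: the paper simply invokes Corollary 3.4 of \cite{foondun2013TAMS} (stated there for Borel probability measures) and extends it by the decomposition $\mu=\mu_+-\mu_-$ and normalization, whereas you give a self-contained proof by mollifying $\mu$ with a Gaussian, applying the Schwartz-level identity \eqref{fourier mode schwartz}, and then passing to the limit: monotone convergence on the spectral side, Fatou plus lower semicontinuity for one inequality on the physical side, and the subordination $f_\alpha=c\int_0^\infty s^{-(1+\alpha)/2}p_s\,\ud s$ combined with monotonicity of the heat energy $t\mapsto\int_\R\int_\R p_t(x-y)\,\mu(\ud x)\mu(\ud y)$ for the reverse inequality. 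For $f=f_\alpha$ (or $h_\alpha$) and a finite \emph{nonnegative} measure $\mu$ this is correct, and more informative than the paper's citation.

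There are, however, two gaps relative to the lemma as stated and as used. First, your closing sentence --- treating a ``general lower semicontinuous $f$'' by writing it as an increasing limit and ``invoking monotone convergence on both sides'' --- does not work: if $f_n\uparrow f$ pointwise, the physical side is monotone, but the Fourier transforms $\widehat{f_n}$ need not converge to $\hat f$ pointwise, let alone monotonically, so no monotone (or dominated) convergence is available on the spectral side. Your reverse inequality genuinely uses the subordination structure of the Riesz kernel; for a general nonnegative-definite lower semicontinuous $f$ (and the statement only makes sense when $\hat f$ is a nonnegative function or measure) a different argument is needed, which is precisely what the cited Corollary 3.4 supplies. Second, you assume $\mu\ge 0$ throughout (Fatou, and Tonelli in the subordination step), while the paper applies the lemma to signed measures such as $\mu(\ud y)=G_{t-s}(x-y)\sigma(u^{(n)}(s,y))\,\ud y$; the paper's (terse) proof reaches that case through the Jordan decomposition, and your argument would need the corresponding polarization step --- apply the nonnegative case to $\mu_+$, $\mu_-$ and $\mu_++\mu_-$, noting that all the energies involved are finite for the compactly supported bounded densities arising in the paper --- to cover the same scope.
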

This lemma is a modification of the Corollary 3.4 of \cite{foondun2013TAMS}, where the measure $\mu$ is assumed to be a Borel probability measure. In fact, for any Borel finite measure $\mu$, we can separate it as $\mu=\mu_+-\mu_-$, where both $\mu_+$ and $\mu_-$ are nonnegative and finite measures. By normalization to a probability measure, Lemma~\ref{fourier mode} follows.
First we prove the following uniform boundedness result.

\begin{proposition}\label{uniform boundedness}
Let $u_\alpha(t,x)$ be the solution to \eqref{colored SWE}, then
    \begin{equation}\label{alpha uniform bound}
    \sup_{\alpha\in I\bigcup\{\alpha_0\}}\sup_{t\in [0,T]}\sup_{x\in \R}\|u_\alpha(t,x)\|_k<\infty\,,\quad \forall k\geq 2\,.
    \end{equation}
\end{proposition}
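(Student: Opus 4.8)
The plan is to work with the mild formulation \eqref{mild solution} and run a Picard/Gronwall-type argument on the weighted norm $\calN_{\gamma,k}$, obtaining bounds that are uniform over $\alpha\in[a_0,1)$. First I would observe that $I_0(t,x)$ is bounded uniformly in $(t,x)$ because $u_0$ and $v_0$ are bounded, and the deterministic drift term is controlled by the Lipschitz bound on $b$ together with $\int_0^t\int_\R G_{t-s}(x-y)\,\ud y\,\ud s = \int_0^t (t-s)\,\ud s \le T^2/2$; applying $\|\cdot\|_k$ and Minkowski's integral inequality turns this into a term bounded by $C(1+\calN_{\gamma,k}(u_\alpha))\cdot(\text{something small in }\gamma)$. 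The key point is the stochastic term: using the Burkholder--Davis--Gundy inequality for Dalang--Walsh integrals, $\big\|\int_0^t\int_\R G_{t-s}(x-y)\sigma(u_\alpha(s,y))\eta_\alpha(\ud s,\ud y)\big\|_k^2$ is bounded by a constant times
\[
\int_0^t \int_\R\int_\R G_{t-s}(x-y)G_{t-s}(x-z)\,\big\|\sigma(u_\alpha(s,y))\sigma(u_\alpha(s,z))\big\|_{k/2}\, f_\alpha(y-z)\,\ud y\,\ud z\,\ud s\,.
\]

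Next I would bound the $\|\cdot\|_{k/2}$ factor by Cauchy--Schwarz and the Lipschitz/linear-growth bound on $\sigma$, pulling out $\sup_{y}\|\sigma(u_\alpha(s,y))\|_k^2 \le C(1 + e^{2\gamma s}\calN_{\gamma,k}(u_\alpha)^2)$. The remaining spatial integral is $\int_\R\int_\R G_{t-s}(x-y)G_{t-s}(x-z) f_\alpha(y-z)\,\ud y\,\ud z$, which by Lemma~\ref{fourier mode} applied to the finite measure $G_{t-s}(x-\cdot)\,\ud y$ equals $\frac{1}{2\pi}\int_\R |\calF G_{t-s}(\xi)|^2 g_{1-\alpha}(\xi)\,\ud \xi = \frac{1}{2\pi}\int_\R |\calF G_{t-s}(\xi)|^2 |\xi|^{\alpha-1}\,\ud\xi$. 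By the second cited lemma (with exponent $\alpha-1\in(-1,0)$) this is exactly $(t-s)^{2-\alpha}2^{2-\alpha}A_{\alpha-1}/(2\pi)$, and since $A_{\alpha-1}$ is continuous on $[a_0-1,0]$ hence bounded by a constant $C$ independent of $\alpha$, and $(t-s)^{2-\alpha}2^{2-\alpha}\le C_T$ for $t-s\le T$, the whole spatial factor is bounded by a constant uniform in $\alpha\in[a_0,1)$. This is the crucial place where uniformity in $\alpha$ comes from, and the only real subtlety — it works precisely because $\alpha$ stays bounded away from $1$ is irrelevant here (the lemma is fine up to $\alpha=1$), but $A_{\alpha-1}$ must not blow up, which it does not on the compact parameter range.

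Assembling these estimates gives, for a suitable constant $C=C(T,\Lip_b,\Lip_\sigma,\|u_0\|_\infty,\|v_0\|_\infty,k)$ independent of $\alpha\in[a_0,1)$,
\[
e^{-\gamma t}\|u_\alpha(t,x)\|_k \le C + C\Big(\int_0^t e^{-2\gamma(t-s)}\,\ud s\Big)^{1/2}\big(1+\calN_{\gamma,k}(u_\alpha)\big)\,,
\]
where the drift contributes an analogous term with $\int_0^t e^{-\gamma(t-s)}\,\ud s$. Taking the supremum over $x\in\R$ and $t\in[0,T]$ yields $\calN_{\gamma,k}(u_\alpha) \le C + C\gamma^{-1/2}(1+\calN_{\gamma,k}(u_\alpha))$; choosing $\gamma$ large enough (depending only on $C$, hence not on $\alpha$) so that $C\gamma^{-1/2}\le 1/2$ absorbs the last term and gives $\calN_{\gamma,k}(u_\alpha)\le 2C+1$ uniformly in $\alpha\in[a_0,1)$. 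Since $\sup_{t\in[0,T]}\sup_x\|u_\alpha(t,x)\|_k \le e^{\gamma T}\calN_{\gamma,k}(u_\alpha)$ with this fixed $\gamma$, the claimed bound \eqref{alpha uniform bound} follows. One technical point to be careful about: the BDG/Gronwall argument must first be run with $u_\alpha$ replaced by its Picard iterates $u_\alpha^{(n)}$ to guarantee all quantities are finite a priori, and the uniform-in-$n$ bound then passes to the limit; the condition \eqref{coupling condition} and finiteness of second moments from Dalang's condition ensure this is legitimate. The main obstacle is simply organizing the BDG estimate so that the $\alpha$-dependence is entirely isolated in the factor $\int_\R|\calF G_{t-s}(\xi)|^2|\xi|^{\alpha-1}\,\ud\xi$ and then invoking continuity of $A_{\alpha-1}$; everything else is the standard fixed-point bookkeeping.
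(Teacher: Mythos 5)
Your proposal is correct and follows essentially the same route as the paper's proof: the weighted norm $\calN_{\gamma,k}$, Burkholder plus Minkowski for the stochastic term, Lemma~\ref{fourier mode} to rewrite the spatial double integral as $\int_\R|\calF G_{t-s}(\xi)|^2|\xi|^{\alpha-1}\,\ud\xi$, uniformity in $\alpha$ from boundedness of $A_{\alpha-1}$ on $[a_0-1,0]$, and absorption by taking $\gamma$ large independently of $\alpha$. Your extra remark about first running the estimate on Picard iterates to guarantee a priori finiteness of $\calN_{\gamma,k}$ is a sensible precaution that the paper leaves implicit, but it does not change the argument.
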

\begin{proof}
    For any fixed $k\geq 2$, we apply the $k$-th norm $\|\cdot\|_{L^k(\Omega)}$ on the both sides of mild solution representation \eqref{mild solution}, and take square
    \begin{align*}
        \|u_\alpha(t,x)\|_k^2\leq & 3\|I_0(t,x)\|_k^2+3\left\|\int_0^t\int_\R G_{t-s}(x-y)b(u_\alpha(s,y))\ud y \ud s\right\|_k^2\\
        &+3\left\|\int_0^t\int_\R G_{t-s}(x-y)\sigma(u_\alpha(s,y))\eta_\alpha(\ud s,\ud y)\right\|_k^2\,.
    \end{align*}
    By assumption, $u_0$ and $v_0$ are bounded, thus
    \begin{align*}
        \|I_0(t,x)\|_k^2\leq \left(T\sup_{y\in \R}|v_0(y)|+\sup_{y\in \R}|u_0(u)|\right)^2=C_0< \infty.
    \end{align*}
    Using Minkowski inequality and the Lipschitz condition of $b$,
    \begin{align*}
        &e^{-2\gamma t}\left\|\int_0^t\int_\R G_{t-s}(x-y)b(u_\alpha(s,y))\ud y \ud s\right\|_k^2\\
        &\quad\leq \left(\int_0^t \left(|b(0)|+\Lip_be^{-\gamma (t-s)}\sup_{s\in [0,T]}\sup_{y\in \R}e^{-\gamma s}\|u_\alpha(s,y)\|_k\right)\int_\R G_{t-s}(x-y)\ud y\ud s\right)^2\\
        &\quad\leq \left(C+C\calN_{\gamma,k}(u_\alpha)\int_0^t(t-s)e^{-\gamma(t-s)}\ud s\right)^2\\
        &\quad \leq C_1+\frac{C_1}{\gamma^4}\calN_{\gamma,k}(u_\alpha)^2\,.
    \end{align*}
    An application of Burkholder's inequality, Minkowski inequality and Lemma~\ref{fourier mode} implies
    \begin{align*}
        &e^{-2\gamma t}\left\|\int_0^t\int_\R G_{t-s}(x-y)\sigma(u_\alpha(s,y))\eta_\alpha(\ud s,\ud y)\right\|_k^2\\
        &\quad \leq C_k\int_0^te^{-2\gamma(t-s)}\Bigg(\int_\R\int_\R G_{t-s}(x-y)e^{-\gamma s}\|\sigma(u_\alpha(s,y))\|_k f_\alpha(y-z)\\
        &\qquad \qquad \qquad \qquad \qquad \quad \times G_{t-s}(x-z)e^{-\gamma s}\|\sigma(u_\alpha(s,z))\|_k \ud y\ud z\Bigg)\ud s\\
        &\quad\leq C(1+\calN_{\gamma,k}(u_\alpha)^2)\int_0^t e^{-2\gamma(t-s)}\left(\int_\R \hat{f}_\alpha(\xi)\left|\calF G_{t-s}(x-\cdot)(\xi)\right|^2\ud \xi\right)\ud s\\
        &\quad\leq C(1+\calN_{\gamma,k}(u_\alpha)^2)\int_0^t e^{-2\gamma (t-s)} (t-s)^2\left(\sup_\alpha \int_\R \frac{\hat{f}_\alpha(\xi)}{1+|\xi|^2}\ud \xi\right)\ud s\\
        &\quad\leq C_2+\frac{C_2}{\gamma^3}\calN_{\gamma,k}(u_\alpha)^2\,.
    \end{align*}
    Choose $\gamma=\gamma_0=\max\{(12C_1)^{\frac{1}{4}},(12C_2)^{\frac{1}{3}},2\}$, we have
    \begin{align*}
        e^{-2\gamma t}\|u_\alpha(t,x)\|_k^2\leq 3C_0+3C_1+3C_2+\frac{1}{2}\calN_{\gamma,k}(u)^2\,.
    \end{align*}
    Take supremum over $t\in [0,T]$ and $x\in \R$,
    \begin{align*}
        \calN_{\gamma,k}(u_\alpha)^2\leq C\,.
    \end{align*}
    Notice that the factor $e^{-\gamma t}$ is bounded below by $e^{-\gamma_0 T}$,
    \begin{align*}
        \sup_{t\in [0,T]}\sup_{x\in \R}\|u_\alpha(t,x)\|_k\leq Ce^{\gamma_0 T}<\infty\,.
    \end{align*}
    Take supremum over $\alpha\in I$ and we get the desired result.
\end{proof}
Now we are prepared to show the H\"{o}lder regularity. We claim the following result.
\begin{proposition}\label{holder}
For any $\alpha\in I\bigcup\{\alpha_0\}$, let $u_\alpha$ be the solution to \eqref{colored SWE}, then for any $x,x'\in\R$ and $t,t'\in [0,T]$,
    \begin{equation}
        \left\|u_\alpha(t',x')-u_\alpha(t,x)\right\|_k\leq C_T\left(|x'-x|^{\beta}+|t'-t|^\beta\right)\,.
    \end{equation}
Thus, an application of Kolmogorov continuity theorem implies that for each $\alpha$, there exists a continuous modification of $u_\alpha(t,x)$ with respect to $(t,x)\in [0,T]\times\R$. 
\end{proposition}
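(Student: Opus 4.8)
The plan is to work from the mild formulation \eqref{mild solution} and to estimate, in $L^k(\Omega)$, the three contributions to $u_\alpha(t',x')-u_\alpha(t,x)$ coming from the deterministic linear term $I_0$, the drift integral, and the stochastic integral. By the triangle inequality it suffices to treat a pure spatial increment (fixing $t=t'$) and a pure temporal increment (fixing $x=x'$) separately; and since Proposition~\ref{uniform boundedness} already gives $\sup_{t,x}\|u_\alpha(t,x)\|_k=:M_\alpha<\infty$ and hence the trivial bound $\|u_\alpha(t',x')-u_\alpha(t,x)\|_k\le 2M_\alpha$, we may freely assume $|t'-t|\le 1$ and $|x'-x|\le 1$ throughout, the complementary range being absorbed into the constant.

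\textbf{The deterministic terms.} Since $u_0$ is Lipschitz and $v_0$ is bounded, $I_0$ is Lipschitz in $(t,x)$, so $|I_0(t',x')-I_0(t,x)|\le C(|t'-t|+|x'-x|)$. For the drift, Minkowski's inequality, the Lipschitz bound on $b$, and Proposition~\ref{uniform boundedness} give
\[
\left\|\int_0^{t'}\!\!\int_\R G_{t'-s}(x'-y)b(u_\alpha(s,y))\,dy\,ds-\int_0^{t}\!\!\int_\R G_{t-s}(x-y)b(u_\alpha(s,y))\,dy\,ds\right\|_k\le C\int_0^{T}\!\!\int_\R\big|G_{t'-s}(x'-y)-G_{t-s}(x-y)\big|\,dy\,ds ,
\]
and the last integral is $\le C(|t'-t|+|x'-x|)$: for $s$ below both $t$ and $t'$ the inner integral is the Lebesgue measure of the symmetric difference of two intervals, hence $\le 2|x'-x|+2|t'-t|$, while on the short time interval between $t$ and $t'$ the integrand has total mass $\le|t'-t|$. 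Thus both deterministic contributions are Lipschitz in $(t,x)$, which for increments $\le1$ is stronger than the asserted H\"older bound since $\tfrac{2-\alpha}{2}<1$.

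\textbf{The stochastic term, the crux.} Write $\Delta G_{t-s}(y)$ for either $G_{t-s}(x'-y)-G_{t-s}(x-y)$ (spatial increment) or $G_{t'-s}(x-y)-G_{t-s}(x-y)$ (temporal increment, the extra piece $\int_t^{t'}\!\int_\R G_{t'-s}(x-y)\sigma(u_\alpha(s,y))\eta_\alpha(ds,dy)$ being handled the same way). Exactly as in the proof of Proposition~\ref{uniform boundedness} — Burkholder's inequality, then Minkowski's inequality (legitimate since $f_\alpha\ge 0$ by \eqref{riesz kernel}--\eqref{c alpha}), then the Lipschitz bound on $\sigma$ and Proposition~\ref{uniform boundedness} — the squared $L^k$ norm of the stochastic increment is bounded by
\[
C\int_0^{t}\int_\R\int_\R \big|\Delta G_{t-s}(y)\big|\,\big|\Delta G_{t-s}(z)\big|\,f_\alpha(y-z)\,dy\,dz\,ds .
\]
Here the analysis departs from that of Proposition~\ref{uniform boundedness}: because $G_r(z)=\tfrac12\mathds{1}_{\{|z|<r\}}$, the function $|\Delta G_{t-s}|$ equals $\tfrac12(\mathds{1}_{I_1}+\mathds{1}_{I_2})$ for two disjoint intervals $I_1,I_2$ whose common length $\ell$ is at most $|x'-x|$ in the spatial case and at most $|t'-t|$ in the temporal case. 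A scaling change of variables together with the local integrability of the Riesz kernel (which holds precisely because $\alpha<1$) gives $\int_{I_i}\int_{I_i}f_\alpha(y-z)\,dy\,dz\le c_{1-\alpha}\big(\int_{[-1,1]^2}|u-v|^{-\alpha}\,du\,dv\big)\ell^{2-\alpha}=C\ell^{2-\alpha}$, while the cross contributions $\int_{I_1}\int_{I_2}f_\alpha(y-z)\,dy\,dz$ are controlled in the same spirit — by $C\ell^{2-\alpha}$ when the two intervals are separated by a distance $\lesssim\ell$, and by $C\ell^2 g^{-\alpha}$ when their separation $g$ is larger, the latter being of lower order after integration in $s$. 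The (elementary but slightly case-heavy) $s$-integration over $[0,T]$ then yields $C(|x'-x|^{2-\alpha}+|t'-t|^{2-\alpha})$; taking square roots produces the exponent $\tfrac{2-\alpha}{2}$. One also checks that $c_{1-\alpha}\int_{[-1,1]^2}|u-v|^{-\alpha}$ stays bounded as $\alpha\uparrow 1$, so $C$ may be taken independent of $\alpha\in[a_0,1)$, which is what is needed later.

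\textbf{Main obstacle and conclusion.} The delicate point is this last step: the wave kernel is discontinuous, so increments of $G$ are never small in supremum norm, and all the gain must be extracted from the shrinking supports $I_1,I_2$ tested against the \emph{singular} kernel $f_\alpha(y-z)\asymp|y-z|^{-\alpha}$; moreover Burkholder's inequality forces one to work with $|\Delta G_{t-s}|$ rather than $\Delta G_{t-s}$, so the oscillatory cancellation that a spectral estimate would exploit via $|\mathcal F G_{t-s}(x'-\cdot)-\mathcal F G_{t-s}(x-\cdot)|^2=4\sin^2\!\big(\tfrac{(x'-x)\xi}{2}\big)|\mathcal F G_{t-s}(\xi)|^2$ is not available — yet the crude interval bound already gives the sharp power. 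Combining the three estimates yields $\|u_\alpha(t',x')-u_\alpha(t,x)\|_k\le C\big(|x'-x|^{\frac{2-\alpha}{2}}+|t'-t|^{\frac{2-\alpha}{2}}\big)$ for every $k\ge 2$. Choosing $k>\frac{4}{2-\alpha}$, so that $k\cdot\frac{2-\alpha}{2}>2$, the Kolmogorov continuity theorem in the two parameters $(t,x)$ on each rectangle $[0,T]\times[-N,N]$, followed by patching over $N\in\mathbb{N}$, produces the desired continuous (indeed locally H\"older) modification of $u_\alpha$.
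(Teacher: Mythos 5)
Your proposal is correct and follows essentially the same route as the paper: split the mild-solution increment into the $I_0$, drift, and stochastic parts, use Burkholder/Minkowski together with the uniform moment bound, and then estimate $\int_0^t\!\int\!\int |\Delta G|\,f_\alpha\,|\Delta G|$ by exploiting that the wave-kernel increments are indicators of intervals of length $\lesssim |x'-x|$ or $|t'-t|$ tested against the locally integrable Riesz kernel — exactly the content of the paper's Appendix Lemmas, which carry out this computation with explicit antiderivatives. The only cosmetic difference is the piece over $[t,t']$, which the paper bounds via the Fourier identity with $A_{\alpha-1}$ while you bound it by the same real-space support argument; both yield $(t'-t)^{(3-\alpha)/2}$.
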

\begin{proof}
Step I. we prove the H\"{o}lder regularity in the spatial variable
\begin{align*}
    &\left\|u_\alpha(t,x)-u_\alpha(t,x')\right\|_k\\
    &\quad\leq |I_0(t,x)-I_0(t.x')|\\
    &\quad\quad +\left\|\int_0^t\int_\R \left(G_{t-s}(x-y)-G_{t-s}(x'-y)\right)b(u_\alpha(s,y))\ud y\ud s\right\|_k\\
    &\quad\quad+\left\|\int_0^t\int_\R \left(G_{t-s}(x-y)-G_{t-s}(x'-y)\right)\sigma(u_\alpha(s,y))\eta_\alpha(\ud s,\ud y)\right\|_k\\
    &\quad=J_0+J_1+J_2\,.
\end{align*}
Since $u_0$ is Lipschitz with Lipschitz constant $L_{u_0}$ and $v_0$ is bounded,
\begin{align*}
    J_0\leq 2\sup_{y\in \R}|v_0(y)||x-x'|+\Lip_{u_0}|x-x'|\,.
\end{align*}
For $J_1$, we apply Lemma~\ref{spatial holder 1} to obtain
\begin{align*}
    J_1=&\left\|\int_0^t\int_\R \left(G_{t-s}(x-y)-G_{t-s}(x'-y)\right)b(u_\alpha(s,y))\ud y\ud s\right\|_k\\
    &\quad\leq \int_0^t\int_\R \left|G_{t-s}(x-y)-G_{t-s}(x'-y)\right|\left\|b(u_\alpha(s,y))\right\|_k\ud y\ud s\\
    &\quad \leq\int_0^t\int_\R \left|G_{t-s}(x-y)-G_{t-s}(x'-y)\right|\left(|b(0)|+\Lip_b \left\|u_\alpha(s,y)\right\|_k\right)\ud y\ud s\\
    &\quad \leq \left(|b(0)|+\Lip_b \sup_{\alpha\in (\alpha_0,1)}\sup_{s\in [0,t]}\sup_{y\in \R}\left\|u_\alpha(s,y)\right\|_k\right)\int_0^t\int_\R \left|G_s(x-y)-G_s(x'-y)\right|\ud y\ud s\\
    &\quad\leq C(|x'-x|\wedge 1)\,.
\end{align*}
For $J_2$, we apply Lemma~\ref{spatial holder},
\begin{align*}
    J_2&=\left\|\int_0^t\int_\R \left(G_{t-s}(x-y)-G_{t-s}(x'-y)\right)\sigma(u_\alpha(s,y))\eta_\alpha(\ud s,\ud y)\right\|_k\\
    &\quad\leq C_k\left(|\sigma(0)|+\Lip_\sigma \sup_{\alpha\in (\alpha_0,1)}\sup_{s\in [0,t]}\sup_{y\in \R}\left\|u_\alpha(s,y)\right\|_k\right)\\
    &\quad\quad \times\left|\int_0^t\int_\R\int_\R \left|G_{s}(x-y)-G_s(x'-y)\right|f_\alpha(y-z)\left|G_{s}(x-z)-G_s(x'-z)\right|\ud y \ud z \ud s\right|^{\frac{1}{2}}\\
    &\quad \leq C(|x'-x|\wedge 1)^{\beta}\,.
\end{align*}
As a result,
\begin{equation}
    \left\|u_\alpha(t,x)-u_\alpha(t,x')\right\|_k\leq C(|x'-x|\wedge 1)^{\beta}\leq C|x'-x|^{\beta}\,.
\end{equation}
Step II. we prove the H\"{o}lder regularity in the time variable. Assume that $0\leq t< t'\leq T$, we have
\begin{align*}
    &\left\|u_\alpha(t',x)-u_\alpha(t,x)\right\|_k\\
    &\leq  |I_0(t',x)-I_0(t.x)|\\
    &+\left\|\int_0^{t'}\int_\R G_{t'-s}(x-y)b(u_\alpha(s,y))\ud y \ud s-\int_0^t\int_\R G_{t-s}(x-y)b(u_\alpha(s,y))\ud y\ud s\right\|_k\\
    & +\left\|\int_0^{t'}\int_\R G_{t'-s}(x-y)\sigma(u_\alpha(s,y))\eta_\alpha( \ud s,\ud y)-\int_0^t\int_\R G_{t-s}(x-y)\sigma(u_\alpha(s,y))\eta_\alpha(\ud s,\ud y)\right\|_k\\
    &\quad =H_0+H_1+H_2\,.
\end{align*}
Since $u_0$ is Lipschitz and $v_0$ is bounded,
\begin{align*}
    H_0\leq \sup_{y\in \R}|v_0(y)||t-t'|+\Lip_{u_0}|t-t'|\,.
\end{align*}
For $H_1$, we apply Lemma~\ref{time holder 1} to obtain
\begin{align*}
    &H_1\leq \left\|\int_0^{t}\int_\R \left(G_{t'-s}(x-y)-G_{t-s}(x-y\right)b(u_\alpha(s,y))\ud y \ud s\right\|_k\\
    &\quad\quad+\left\|\int_t^{t'}\int_\R G_{t'-s}(x-y)b(u_\alpha(s,y))\ud y\ud s\right\|_k\\
    &\quad =H_1^{(1)}+H_1^{(2)}\,.
\end{align*}
Here,
\begin{align*}
    H_1^{(1)}\leq& \left(|b(0)|+\Lip_b \sup_{\alpha\in (\alpha_0,1)}\sup_{s\in [0,t]}\sup_{y\in \R}\left\|u_\alpha(s,y)\right\|_k\right)\int_0^t\int_\R \left|G_{t'-s}(x-y)-G_{t-s}(x-y)\right|\ud y \ud s\\
    \leq& C(|t'-t|\wedge 1)\,,
\end{align*}
and
\begin{align*}
    H_1^{(2)}\leq& \left(|b(0)|+\Lip_b \sup_{\alpha\in (\alpha_0,1)}\sup_{s\in [0,t]}\sup_{y\in \R}\left\|u_\alpha(s,y)\right\|_k\right)\int_t^{t'}\int_\R G_{t'-s}(x-y)\ud y \ud s\\
    =&\left(|b(0)|+\Lip_b \sup_{\alpha\in (\alpha_0,1)}\sup_{\alpha\in (\alpha_0,1)}\sup_{s\in [0,t]}\sup_{y\in \R}\left\|u_\alpha(s,y)\right\|_k\right)(t'-t)^2\\
    \leq& C_T(|t'-t|\wedge 1)^2\,.
\end{align*}
For $H_2$,
\begin{align*}
    &H_2\leq \left\|\int_0^{t}\int_\R \left(G_{t'-s}(x-y)-G_{t-s}(x-y)\right)\sigma(u_\alpha(s,y))\eta_\alpha(\ud s,\ud y )\right\|_k\\
    &\quad\quad +\left\|\int_t^{t'}\int_\R G_{t'-s}(x-y)\sigma(u_\alpha(s,y))\eta_\alpha(\ud s,\ud y)\right\|_k\\
    &\quad=H_2^{(1)}+H_2^{(2)}\,.
\end{align*}
We apply Burkholder's inequality, Minkowski inequality and Lemma~\ref{time holder} to get
\begin{align*}
    H_2^{(1)}\leq& C_k\left(|\sigma(0)|+\Lip_\sigma \sup_{\alpha\in (\alpha_0,1)}\sup_{s\in [0,t]}\sup_{y\in \R}\left\|u_\alpha(s,y)\right\|_k\right)\\
    &\times\bigg(\int_0^t\int_\R\int_\R\left|G_{s+(t'-t)}(x-y)-G_{s}(x-y)\right|f_{\alpha}(y-z)\\
    &\times \left|G_{s+(t'-t)}(x-z)-G_{s}(x-z)\right|\ud y\ud z\ud s\bigg)^{\frac{1}{2}}\\
    \leq&  C(|t'-t|\wedge 1)^{\beta}\,.
\end{align*}
and
\begin{align*}
    H_2^{(2)}\leq& C\left(|\sigma(0)|+\Lip_\sigma \sup_{\alpha\in (\alpha_0,1)}\sup_{s\in [0,T]}\sup_{y\in \R}\left\|u_\alpha(s,y)\right\|_k\right)\\
    &\times \left(\int_0^{t'-t}\int_\R\left|\calF(G_s(x-\cdot)(\xi))\right|^2\hat{f}_\alpha(\xi)\ud \xi \ud s\right)^{\frac{1}{2}}\\
    \leq& C\int_0^{t'-t}(s^2\vee 1)\ud s\cdot \int_\R \frac{2\hat{f}_\alpha(\xi)}{1+|\xi|^2}\ud \xi \\
    \leq&  C(|t'-t|^{3}\vee |t'-t|)\,.
\end{align*}
And as a result,
\begin{equation}
    \left\|u_\alpha(t',x)-u_\alpha(t,x)\right\|_k\leq C(|t'-t|\wedge 1)^{\beta}\leq C|t'-t|^{\beta}\,.
\end{equation}
The proof is completed.
\end{proof}
In the remaining part of this paper, we assume $u_\alpha$ is continuous.

\subsection{Finite dimensional convergence}\label{Finite dimensional convergence}
In this section, we prove the convergence of finite dimensional distribution. The following theorem is established.
\begin{theorem}\label{convergence in k-th norm}
Let $u_\alpha$ be the solution to \eqref{colored SWE} for any fixed $\alpha\in I\bigcup\{\alpha_0\}$,
    \begin{equation}\label{equation: convergence in k-th norm}
        \lim_{\alpha\xrightarrow{}\alpha_0}\sup_{t\in [0,T]}\sup_{x\in \R}\|u_\alpha(t,x)-u_{\alpha_0} (t,x)\|_k=0\,.
    \end{equation}
\end{theorem}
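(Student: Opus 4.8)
The plan is to realise $u_\alpha$ and $u$ on the same probability space through the coupling \eqref{coupling}, so that \eqref{coupling result} is available, and then to run a Gr\"onwall argument for the difference in the weighted norm $\calN_{\gamma,k}$. First I would subtract the two mild formulations in \eqref{mild solution}; the data term $I_0$ cancels, leaving
\[
  u_\alpha(t,x)-u(t,x)=A_\alpha(t,x)+B_\alpha(t,x)+R_\alpha(t,x),
\]
where $A_\alpha(t,x)=\int_0^t\int_\R G_{t-s}(x-y)\bigl(b(u_\alpha(s,y))-b(u(s,y))\bigr)\,\ud y\,\ud s$,
\[
  B_\alpha(t,x)=\int_0^t\int_\R G_{t-s}(x-y)\bigl(\sigma(u_\alpha(s,y))-\sigma(u(s,y))\bigr)\,\eta_\alpha(\ud s,\ud y),
\]
and, applying \eqref{coupling result} to the leftover piece with $X(s,y)=G_{t-s}(x-y)\sigma(u(s,y))$ (for which \eqref{coupling condition} holds at once since $\sup_{s,y}\|u(s,y)\|_2<\infty$ by the argument of Proposition~\ref{uniform boundedness} and $G$ has compact support),
\[
  R_\alpha(t,x)=\int_0^t\int_\R\Bigl(\bigl(G_{t-s}(x-\cdot)\sigma(u(s,\cdot))\bigr)\ast h_\alpha(y)-G_{t-s}(x-y)\sigma(u(s,y))\Bigr)\,\eta(\ud s,\ud y).
\]
The point of the coupling is that $R_\alpha$ depends only on the \emph{fixed} white-noise solution $u$: it is the genuine discrepancy between $\eta_\alpha$ and $\eta$, whereas $A_\alpha,B_\alpha$ are the terms a Gr\"onwall argument will absorb.

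For $A_\alpha$ and $B_\alpha$ I would repeat the estimates from the proofs of Propositions~\ref{uniform boundedness} and~\ref{holder} almost verbatim — Minkowski's inequality for $A_\alpha$, and for $B_\alpha$ Burkholder's inequality together with Lemma~\ref{fourier mode} and the identity $\int_\R|\calF G_r(\xi)|^2|\xi|^{\alpha-1}\,\ud\xi=r^{2-\alpha}2^{2-\alpha}A_{\alpha-1}$ — but now with the Lipschitz bounds carrying no additive constant, $\|b(u_\alpha(s,y))-b(u(s,y))\|_k\le\Lip_b\|u_\alpha(s,y)-u(s,y)\|_k$ and likewise for $\sigma$. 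Since $3-\alpha\ge2$ and $A_{\alpha-1},\Gamma(3-\alpha)$ remain bounded for $\alpha\in[\alpha_b,1)$, this produces, for every $\gamma>1$ and a constant $C$ independent of $\alpha,t,x,\gamma$,
\[
  e^{-2\gamma t}\|u_\alpha(t,x)-u(t,x)\|_k^2\le\frac{C}{\gamma^{2}}\,\calN_{\gamma,k}(u_\alpha-u)^2+3\,e^{-2\gamma t}\|R_\alpha(t,x)\|_k^2 .
\]
Because $\calN_{\gamma,k}(u_\alpha-u)\le\calN_{\gamma,k}(u_\alpha)+\calN_{\gamma,k}(u)<\infty$, I would fix $\gamma$ so large that $C/\gamma^2\le\tfrac12$, take suprema over $t\in[0,T]$ and $x\in\R$, and conclude that it suffices to show $\sup_{t\in[0,T]}\sup_{x\in\R}\|R_\alpha(t,x)\|_k\to0$ as $\alpha\to1$.

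For $R_\alpha$ I would first treat $k=2$ and bootstrap afterwards. By the Walsh isometry and Plancherel's theorem in the space variable, with $V_s(y):=G_{t-s}(x-y)\sigma(u(s,y))$ and $\calF h_\alpha(\xi)=|\xi|^{(\alpha-1)/2}$ (which follows from $f_\alpha=h_\alpha\ast h_\alpha$ and $\calF f_\alpha(\xi)=|\xi|^{\alpha-1}$),
\[
  \E\bigl[R_\alpha(t,x)^2\bigr]=\frac{1}{2\pi}\int_0^t\E\int_\R\bigl|\calF V_s(\xi)\bigr|^2\bigl(|\xi|^{(\alpha-1)/2}-1\bigr)^2\,\ud\xi\,\ud s,
\]
and I would split the $\xi$-integral at $|\xi|=1$. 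On $\{|\xi|\le1\}$, bounding $|\calF V_s(\xi)|\le\|V_s\|_{L^1}$ and using $\int_{|\xi|\le1}(|\xi|^{(\alpha-1)/2}-1)^2\,\ud\xi=\tfrac2\alpha-\tfrac8{\alpha+1}+2\to0$ together with $\E\|V_s\|_{L^1}^2\le C$ (Cauchy--Schwarz, $G_{t-s}(x-\cdot)$ having support of length $\le2T$, and $\sup_{s,y}\|\sigma(u(s,y))\|_2<\infty$) bounds the low-frequency part uniformly in $t,x$ by a quantity tending to $0$. On $\{|\xi|>1\}$, the elementary inequality $(|\xi|^{(\alpha-1)/2}-1)^2\le1-|\xi|^{(\alpha-1)/2}\le\tfrac{1-\alpha}{2}\log|\xi|\le C_\nu(1-\alpha)|\xi|^\nu$ for a small fixed $\nu\in(0,1)$ reduces matters to $\sup_{s\le t\le T,\,x}\E\|V_s\|_{H^{\nu/2}}^2<\infty$; this last bound I would obtain by splitting $|V_s(y)-V_s(z)|\le G_{t-s}(x-y)\,|\sigma(u(s,y))-\sigma(u(s,z))|+|\sigma(u(s,z))|\,|G_{t-s}(x-y)-G_{t-s}(x-z)|$ in the Gagliardo seminorm and estimating the two pieces with, respectively, the spatial H\"older bound $\|u(s,y)-u(s,z)\|_2\le C|y-z|^{\theta}$ ($\theta\in(\nu/2,1/2)$, from the analogue of Proposition~\ref{holder} for $u$) and the deterministic facts $\sup_{0\le r\le T}\|G_r\|_{H^{\nu/2}}<\infty$, $\sup_{s,z}\|\sigma(u(s,z))\|_2<\infty$. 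This yields \eqref{equation: convergence in k-th norm} for $k=2$; for general $k\ge2$ the interpolation $\|u_\alpha(t,x)-u(t,x)\|_k\le\|u_\alpha(t,x)-u(t,x)\|_2^{1/(k-1)}\|u_\alpha(t,x)-u(t,x)\|_{2k}^{1-1/(k-1)}$ together with the uniform moment bounds of Proposition~\ref{uniform boundedness} (and its analogue for $u$) finishes it.

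The hard part is this last step: everything else is a routine Gr\"onwall argument, but to make the convergence $R_\alpha\to0$ \emph{uniform} in $(t,x)\in[0,T]\times\R$ one needs a quantitative fractional-Sobolev — equivalently H\"older — bound on the random field $G_{t-s}(x-\cdot)\sigma(u(s,\cdot))$ with moments uniform in $s,t,x$, which is exactly where the H\"older regularity established (by the method of) Section~\ref{Holder regularity} enters, applied to the white-noise solution $u$.
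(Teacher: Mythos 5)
Your argument is correct, but it takes a genuinely different route from the paper. The paper proves the theorem at the level of Picard iterations: it subtracts the coupled iteration \eqref{u alpha picard} from \eqref{u picard}, absorbs the Lipschitz terms through a recursion for $\calN_{\gamma,k}(u_\alpha^{(n)}-u^{(n)})$, and treats the remainder (your $R_\alpha$, but with the iterate $u^{(n)}$ in place of $u$) qualitatively: after the same frequency splitting at $|\xi|=1$, the low-frequency part vanishes by dominated convergence, while the high-frequency part is handled by inserting $\sigma(u)$, invoking the uniform-in-$\alpha$ convergence of the white-noise Picard scheme, and again DCT, with an $\epsilon$/$N$/$\alpha^*$ bookkeeping before letting $n\to\infty$. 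You instead work directly with the two solutions, which is legitimate because the a priori moment bounds (Proposition~\ref{uniform boundedness} and its white-noise analogue) make $\calN_{\gamma,k}(u_\alpha-u)$ finite, so the Lipschitz terms can be absorbed in one Gr\"onwall step; and you estimate $R_\alpha$ quantitatively, bounding the multiplier $\bigl(|\xi|^{(\alpha-1)/2}-1\bigr)^2=(g_{1-\alpha}-2g_{\frac{1-\alpha}{2}}+1)(\xi)$ by $C_\nu(1-\alpha)|\xi|^{\nu}$ on $\{|\xi|>1\}$ and reducing the high-frequency part to a uniform $H^{\nu/2}$ bound on $G_{t-s}(x-\cdot)\sigma(u(s,\cdot))$, obtained from the spatial H\"older continuity of $u$ and the wave-kernel moment computation $\int_\R|\calF G_r(\xi)|^2|\xi|^{\nu}\ud\xi<\infty$ quoted in Section 3.1. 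This buys an explicit rate (order $(1-\alpha)^{1/2}$ at high frequencies, plus an explicit low-frequency term) and dispenses with the Picard and diagonal-extraction machinery, at the price of needing the white-noise analogues of Propositions~\ref{uniform boundedness} and~\ref{holder} and the Sobolev--Gagliardo equivalence; your interpolation bootstrap from $k=2$ to general $k$ is also different from the paper, which runs its recursion for every $k$ directly. Two points to tighten when writing it up: in the Gagliardo seminorm you must use $\E|u(s,y)-u(s,z)|^2\le C\bigl(|y-z|^{2\theta}\wedge 1\bigr)$, since with $2\theta>\nu$ the pure H\"older bound would make the far-off-diagonal $z$-integral diverge, so the uniform moment bound is needed for $|y-z|\ge 1$ (the paper's H\"older estimates do come in this $\wedge\,1$ form); and the Plancherel/multiplier identity for $V_s\ast h_\alpha$ should be justified as in the paper's treatment of $I_3$, by expanding the square and applying Lemma~\ref{fourier mode}, because $h_\alpha$ is not integrable and the convolution is not covered by the elementary Fourier calculus.
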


We introduce scheme of Picard iterations. The initial iteration is given as
\begin{align*}
    u_\alpha^{(0)}(t,x)=u ^{(0)}(t,x)=I_0(t,x)\,.
\end{align*}
Define recursively,
\begin{align}\label{u alpha picard original}
        \begin{aligned}
            u_\alpha^{(n+1)}(t,x)=&I_0(t,x)+\int_0^t\int_\R G_{t-s}(x-y)b(u_\alpha^{(n)}(s,y))\ud y\ud s\\
        &+\int_0^t\int_\R G_{t-s}(x-y)\sigma(u_\alpha^{(n)}(s,y))\eta_\alpha(\ud s,\ud y)\,,
        \end{aligned}
\end{align}
and especially,
\begin{equation}\label{u picard}
        \begin{aligned}
            u_{\alpha_0} ^{(n+1)}(t,x)=&I_0(t,x)+\int_0^t\int_\R G_{t-s}(x-y)b(u _{\alpha_0}^{(n)}(s,y))\ud y\ud s\\
        &+\int_0^t\int_\R G_{t-s}(x-y)\sigma(u _{\alpha_0}^{(n)}(s,y))\eta_{\alpha_0}(\ud s,\ud y)\,.
        \end{aligned}
\end{equation}
The following two lemmas proves the continuity of these Picard iterations. The proofs basically follow the same structure as Proposition~\ref{holder}.
\begin{lemma}\label{picard alpha property}
    $\|u_\alpha^{(n)}(t,x)\|_k$ is uniformly bounded in all non-negative integer $n$
    \begin{equation}\label{picard uniform bound alpha}
        \sup_{\alpha\in I}\sup_{n\geq 0}\sup_{t\in [0,T]}\sup_{x\in \R}\|u_\alpha^{(n)}(t,x)\|_k<\infty\,,\quad \forall k\geq 2\,.
    \end{equation}
    and $u_\alpha^{(n)}(t,x)$ has a continuous modification.
\end{lemma}
\begin{proof}
    We may see $\calN_{\gamma,k}(u_\alpha^{(n)})$ is uniformly bounded in all the non-negative integers $n$ and gamma large enough  $\gamma>\gamma_1$ where $\gamma_1$ is constant for fixed $k$. Indeed, we start from \eqref{u picard} and follow the same steps as in the proof of Proposition~\ref{uniform boundedness}, and get
\begin{align*}
    \mathcal{N}_{\gamma,k}(u_\alpha^{(n+1)})^2\leq 3C_0+3C_1+3C_2+\frac{3C_1}{\gamma^4}\mathcal{N}_{\gamma,k}(u_\alpha^{(n)})^2+\frac{3C_2}{\gamma^2}\mathcal{N}_{\gamma,k}(u_\alpha^{(n)})^2\,,
\end{align*}
where $C_0$, $C_1$ and $C_2$ are the same as in proof of Proposition~\ref{uniform boundedness}.
 Thus, we can choose $\gamma_1=\max\{(12C_1)^{\frac{1}{4}},(12C_2)^{\frac{1}{2}},2\}$ and we have for all $\gamma>\gamma_1$,
\begin{equation}\label{half calN iteration}
    \mathcal{N}_{\gamma,k}(u_\alpha^{(n+1)})^2\leq C+\frac{1}{2}\mathcal{N}_{\gamma,k}(u_\alpha^{(n)})^2\,.
\end{equation}
Thus,
\begin{align*}
    \mathcal{N}_{\gamma,k}(u_\alpha^{(n)})^2\leq \left(\frac{1}{2}\right)^{n}\mathcal{N}_{\gamma,k}(u_\alpha^{(0)})^2+\sum_{i=0}^{n-1}\left(\frac{1}{2}\right)^{i}C\leq \sup_{y\in \R}|u_0(y)|^2+2C\,,
\end{align*}
uniformly for all $n\in \bbN$ and $\gamma>\gamma_1$. Multiply both sides by $e^{2\gamma T}$ then we get the uniform bound \eqref{picard uniform bound alpha}. The rest of the proof follows immediately with the same steps in proof of Proposition~\ref{holder}, using \eqref{picard uniform bound alpha} instead of \eqref{alpha uniform bound}, which is omitted.
\end{proof}
Thanks to Lemma~\ref{picard alpha property}, we can see that the condition \eqref{coupling condition} is fulfilled when taking $X(s,y)=G_{t-s}(x-y)$. Thus, by the coupling relation \eqref{coupling result},
\begin{equation}\label{u alpha picard}
    \begin{aligned}
        u_\alpha^{(n+1)}(t,x)=&I_0(t,x)+\int_0^t\int_\R G_{t-s}(x-y)b(u_\alpha^{(n)}(s,y))\ud y\ud s\\
        &+\int_0^t\int_\R \left[\left(G_{t-s}(x-\cdot)\sigma(u_\alpha^{(n)}(s,\cdot))\right)\ast h_\alpha\right](y)\eta(\ud s,\ud y)\,.
    \end{aligned}
\end{equation}
where $\eta$ is the white noise defined in Section~\ref{definition of the noise}.
And the same for $\alpha_0$.
Now we are about to prove Theorem~\ref{convergence in k-th norm}.
\begin{proof}[Proof of Theorem~\ref{convergence in k-th norm}]
Since ${\alpha_0}$ is fixed, we write $u:=u_{\alpha_0}$, $h:=h_{\alpha_0}$ and $f:=f_{\alpha_0}$ for simplicity.
Make subtraction of \eqref{u alpha picard} with \eqref{u picard} then take the $k$-th norm,
\begin{align*}
    &\left\|u_\alpha^{(n+1)}(t,x)-u^{(n+1)}(t,x)\right\|_k\\
    \leq &\left\|\int_0^t\int_\R G_{t-s}(x-y)\left(b(u_\alpha^{(n)}(s,y))-b(u ^{(n)}(s,y))\right)\ud y\ud s\right\|_k\\
    & +\left\|\int_0^t\int_\R \left[G_{t-s}(x-\cdot)(\sigma(u_\alpha^{(n)}(s,\cdot))-\sigma(u ^{(n)}(s,\cdot))\ast h_\alpha\right](y)\eta(\ud s,\ud y)\right\|_k\\
    & +\left\|\int_0^t\int_\R\left[G_{t-s}(x-\cdot)\sigma(u ^{(n)}(s,\cdot))\right]\ast (h_\alpha-h )(y)\eta( \ud s,\ud y)\right\|_k\\
     =&I_1+I_2+I_3\,.
\end{align*}
Now we evaluate each term. For $I_1$, multiply $e^{-\gamma t}$ with some $\gamma>1$,
\begin{align*}
    &e^{-\gamma t}I_1\leq\int_0^te^{-\gamma(t-s)}\int_\R e^{-\gamma s}\Lip_b G_{t-s}(x-y)\left\|u_\alpha^{(n)}(s,y)-u ^{(n)}(s,y)\right\|_k\\
    &\qquad\leq \Lip_b \calN_{\gamma,k}(u_\alpha^{(n)}-u ^{(n)})\int_0^te^{-\gamma(t-s)}(t-s)\ud s\\
    &\qquad\leq \frac{\Lip_b}{\gamma}\calN_{\gamma,k}(u_\alpha^{(n)}-u ^{(n)})\,.
\end{align*}
For $I_2$, using Lemma~\ref{fourier mode},
\begin{align*}
    &e^{-\gamma t}I_2\leq C_k\Big\|\int_0^t e^{-2\gamma(t-s)}\int_\R\int_\R e^{-2\gamma s}G_{t-s}(x-y)\left(\sigma(u_\alpha^{(n)}(s,y))-\sigma(u ^{(n)}(s,y))\right)\\
    &\qquad \qquad\times f_\alpha(y-z)G_{t-s}(x-z)\left(\sigma(u_\alpha^{(n)}(s,z))-\sigma(u ^{(n)}(s,z))\right)\ud y \ud z \ud s\Big\|_{\frac{k}{2}}^{\frac{1}{2}}\\
    &\qquad\leq C\Big(\int_0^t e^{-2\gamma(t-s)}\int_\R \int_\R \Lip_\sigma^2 e^{-2\gamma s}\sup_{y\in \R}\|u_\alpha^{(n)}(s,y)-u ^{(n)}(s,y)\|_k^2\\
    &\qquad\qquad \times G_{t-s}(x-y)f_\alpha(y-z)G_{t-s}(x-z)\ud y\ud z\ud s\Big)^{\frac{1}{2}}\\
    &\qquad \leq C\Lip_\sigma \calN_{\gamma,k}(u_\alpha^{(n)}-u ^{(n)})\left(\int_0^t e^{-2\gamma(t-s)}\int_\R \hat{f}_\alpha(\xi)|\calF G_{t-s}(\xi)|^2\ud \xi\ud s\right)^{\frac{1}{2}}\\
    &\qquad = C\Lip_\sigma \calN_{\gamma,k}(u_\alpha^{(n)}-u ^{(n)})\left(\int_0^te^{-2\gamma(t-s)}\int_\R\frac{\hat{f}_\alpha(\xi)}{1+|\xi|^2}(t-s)^2\ud \xi \ud s \right)^{\frac{1}{2}}\\
    &\qquad \leq \left(\frac{C}{\gamma^{3}}\Gamma(3)\right)^{\frac{1}{2}}\Lip_\sigma \calN_{\gamma,k}(u_\alpha^{(n)}-u ^{(n)})\,.
\end{align*}
For $I_3$, apply Burkholder's inequality,
{$$
\begin{aligned}
    I_3=&\bigg\|\int_0^t\int_{\R}\left[G_{t-s}(x-\cdot)\sigma(u ^{(n)}(s,\cdot))*h_{\alpha}\right](y)\eta(\ud s,\ud y) \\
    &\qquad- \int_0^t\int_{\R}[G_{t-s}(x-\cdot)\sigma(u ^{(n)}(s,\cdot))\ast h ](y)\eta(\ud s,\ud y)\bigg\|_k\\
    &\leq C_k \bigg\|\int_0^t\int_{\R}\left[G_{t-s}(x-\cdot)\sigma(u ^{(n)}(s,\cdot))*h_{\alpha}\right]^2(y)\ud y \ud s\\
    &\qquad - 2 \int_0^t\int_{\R}\left[G_{t-s}(x-\cdot)\sigma(u ^{(n)}(s,\cdot))*h_{\alpha}\right](y)\times [G_{t-s}(x-y)\sigma(u ^{(n)}(s,y))\ast h ]\ud y \ud s\\
    &\qquad\quad + \int_0^t\int_{\R}\left[G_{t-s}(x-\cdot)\sigma(u ^{(n)}(s,\cdot))*h \right]^2(y)\ud y \ud s \bigg\|_{\frac{k}{2}}^{1/2}\\
    &= C_k \bigg\|\int_0^t\int_{\R}\int_\R \left(G_{t-s}(x-y)\sigma(u ^{(n)}(s,y))f_\alpha(y-z)G_{t-s}(x-z)\sigma(u ^{(n)}(s,z))\right))\ud z  \ud y\ud s\\
    &\quad - 2 \int_0^t\int_{\R^2}
    \left(G_{t-s}(x-z)\sigma(u ^{(n)}(s,z))(h_\alpha\ast h)(z-w)G_{t-s}(x-w)\sigma(u ^{(n)}(s,w))\right))\ud z\ud w \ud s\\
    &\quad + \int_0^t\int_{\R}\int_\R \left(G_{t-s}(x-y)\sigma(u ^{(n)}(s,y))f (y-z)G_{t-s}(x-z)\sigma(u ^{(n)}(s,z))\right))\ud y\ud z\ud s \bigg\|_{\frac{k}{2}}^{1/2}\\
\end{aligned}
$$}
By Lemma~\ref{picard alpha property}, $u ^{(n)}(t,x)$ is continuous, $G_{t-s}(x-\cdot)\sigma(u^{(n)}(s,\cdot))$ is a bounded function with compact support. We apply Lemma~\ref{fourier mode}, to obtain 

\begin{align*}
    I_3\leq& C\left\|\int_0^t\int_\R (\hat{h}_\alpha^2-2\hat{h}_\alpha\hat{h}+\hat{h}^2)(\xi)\left|\calF(G_{t-s}(x-\cdot)\sigma(u^{(n)}(s,\cdot)))(\xi)\right|^2\ud \xi\ud s\right\|_{\frac{k}{2}}^{\frac{1}{2}}\\
    \leq &C\left\|\int_0^t\int_{[-\omega,\omega]} (\hat{h}_\alpha^2-2\hat{h}_\alpha\hat{h}+\hat{h}^2)(\xi)\left|\calF(G_{t-s}(x-\cdot)\sigma(u^{(n)}(s,\cdot)))(\xi)\right|^2\ud \xi\ud s\right\|_{\frac{k}{2}}^{\frac{1}{2}}\\
    &+C\left\|\int_0^t\int_{\R\setminus {[-\omega,\omega]}} (\hat{h}_\alpha^2-2\hat{h}_\alpha\hat{h}+\hat{h}^2)(\xi)\left|\calF(G_{t-s}(x-\cdot)\sigma(u^{(n)}(s,\cdot)))(\xi)\right|^2\ud \xi\ud s\right\|_{\frac{k}{2}}^{\frac{1}{2}}\\
    =&I_{4}+I_{5}\,,
\end{align*}
where $\omega$ is given in the Assumption~\ref{assumption on f}. 
By the Fourier transform \eqref{fourier transform}, the Fourier part can be estimated as following,
\begin{align*}
    &\left|\calF \left(G_{t-s}(x-\cdot)\sigma(u^{(n)}(s,\cdot))\right)(\xi)\right|^2\\
    &\quad \leq \left\|G_{t-s}(x-\cdot)\sigma(u^{(n)}(s,\cdot))\right\|_{L^1(\R)}^2\\
    &\quad \leq C \left\|G_{t-s}(x-\cdot)(1+u_\alpha^{(n)}(s,\cdot))\right\|^2_{L^1(\R)}\\
    &\quad \leq C\left(2(t-s)^2+2\left\|G_{t-s}(x-\cdot)u^{(n)}(s,\cdot)\right\|_{L^1(\R)}^2\right)\\
    &\quad \leq C\left(T^2+\left\|G_{t-s}(x-\cdot)u^{(n)}(s,\cdot)\right\|_{L^1(\R)}^2\right )\,.
\end{align*}
Thus, we have
\begin{align*}
    &I_4\leq C\left\|\int_0^t\int_{[-\omega,\omega]} |\hat{h}_\alpha^2-2\hat{h}_\alpha\hat{h}+\hat{h}^2|(\xi)\left(T^2+\left\|G_{t-s}(x-\cdot)u^{(n)}(s,\cdot)\right\|_{L^1(\R)}^2\right )\ud \xi\ud s\right\|_{\frac{k}{2}}^{\frac{1}{2}}\\
    &\quad \leq CT^2\left(\int_0^t\int_{[-\omega,\omega]} |\hat{h}_\alpha^2-2\hat{h}_\alpha\hat{h}+\hat{h}^2|(\xi)\ud \xi\ud s\right)^{\frac{1}{2}}\\
    &\quad\quad +C\left(\int_0^t\int _{[-\omega,\omega]}|\hat{h}_\alpha^2-2\hat{h}_\alpha\hat{h}+\hat{h}^2|(\xi)\left\|G_{t-s}(x-\cdot)\right\|_{L^1(\R)}^2\sup_{y\in \R}\|u^{(n)}(s,y)\|_k^2\ud \xi\ud s\right)^{\frac{1}{2}}\\
    &\quad =I_4^{(1)}+I_4^{(2)}\,.
\end{align*}
By triangle inequality, noting that $h_\alpha$ and $h$ are non-negative, we have
\begin{align*}
    I_4^{(1)}=&C_T\int_0\int_{[-\omega, \omega]}|\hat{h}_\alpha-\hat{h}|^2(\xi)\ud \xi \ud s\\
    \leq& C_T\int_0\int_{[-\omega, \omega]}|\hat{h}_\alpha-\hat{h}||\hat{h}_\alpha+\hat{h}|(\xi)\ud \xi \ud s\\
    \leq & C_T\int_0\int_{[-\omega, \omega]}|\hat{f}_\alpha-\hat{f}|(\xi)\ud \xi \ud s
\end{align*}
which converges to 0 since $\hat{f}_\alpha$ converges to $\hat{f}$ in $L^1_{loc}(\R)$ by Assumption~\ref{assumption on f}.
\begin{align*}
    e^{-\gamma t}I_4^{(2)}\leq& C\left(\int_0^t\int _{[-\omega,\omega]}e^{-2\gamma (t-s)}|\hat{h}_\alpha^2-2\hat{h}_\alpha\hat{h}+\hat{h}^2|(\xi)T^2{\sup_{s\in [0,t]}\sup_{y\in \R}e^{-2\gamma s}\|u^{(n)}(s,y)\|_k^2}\ud \xi\ud s\right)^{\frac{1}{2}}\\
    \leq& CT\calN_{\gamma,k}(u^{(n)})\left(\int_0^t\int _{[-\omega,\omega]}|\hat{h}_\alpha^2-2\hat{h}_\alpha\hat{h}+\hat{h}^2|(\xi)\ud \xi\ud s\right)^{\frac{1}{2}}\,.
\end{align*}
By the proof of Lemma~\ref{picard alpha property}, $\calN_{\gamma,k}(u^{(n)})$ is uniformly bounded in all the non-negative integers $n$ and gamma large enough  $\gamma\geq\gamma_1$ where $\gamma_1$ is constant for fixed $k$. Thus, by Dominated convergence theorem, $e^{-\gamma t}I_4^{(2)}$ converges to $0$ as $\alpha$ goes to $\alpha_0$. By the definition of convergence, for any $\epsilon>0$, there exists $\delta^*$, such that $I_4^{(1)}+e^{-\gamma t}I_4^{(2)}<\epsilon/2$ for any $\alpha\in (\alpha_0-\delta^*,\alpha_0+\delta^*)\bigcup I$.
For $I_5$, we separate the integral into two parts,
\begin{align*}
    &I_{5}\leq C\left\|\int_0^t\int_{\R\setminus {[-\omega,\omega]}} (\hat{h}_\alpha^2-2\hat{h}_\alpha\hat{h}+\hat{h}^2)(\xi)\left|\calF(G_{t-s}(x-\cdot)(\sigma(u^{(n)}(s,\cdot))-\sigma(u(s,\cdot)))(\xi))\right|^2\ud \xi\ud s\right\|_{\frac{k}{2}}^{\frac{1}{2}}\\
    &\quad\quad +C\left\|\int_0^t\int_{\R\setminus {[-\omega,\omega]}} (\hat{h}_\alpha^2-2\hat{h}_\alpha\hat{h}+\hat{h}^2)(\xi)\left|\calF(G_{t-s}(x-\cdot)\sigma(u(s,\cdot)))(\xi)\right|^2\ud \xi\ud s\right\|_{\frac{k}{2}}^{\frac{1}{2}}\\
    &\quad =I_{5,n}^{(1)}+I_5^{(2)}\,.
\end{align*}
Note that $|\hat{h}_\alpha^2-2\hat{h}_\alpha\hat{h}+\hat{h}^2|(\xi)\leq (\hat{f}_\alpha+\hat{f})(\xi)$, which is uniformly in $\alpha$ for any {$\xi \in \R \setminus[-\omega,\omega]$} by Assumption~\ref{assumption on f}, and 
\begin{align*}
    &\left\|\int_0^t\int_{\R\setminus {[-\omega,\omega]}} \left|\calF(G_{t-s}(x-\cdot)(\sigma(u^{(n)}(s,\cdot))-\sigma(u(s,\cdot)))(\xi))\right|^2\ud \xi\ud s\right\|_{\frac{k}{2}}^{\frac{1}{2}}\\
    &\quad \leq \left\|\int_0^t\int_\R \left|G_{t-s}(x-y)(\sigma(u^{(n)}(s,y))-\sigma(u(s,y))\right|^2\ud y \ud s\right\|_{\frac{k}{2}}^{\frac{1}{2}}\\
    &\quad \leq \Lip_\sigma \left(\int_0^t\int_\R G_{t-s}^2(x-y)\ud y\ud s \right)^{\frac{1}{2}}\sup_{s\in [0,t]}\sup_{y\in \R}\left\|u^{(n)}(s,y)-u(s,y)\right\|_k\\
    &\quad\leq C_T \sup_{s\in [0,t]}\sup_{y\in \R}\left\|u^{(n)}(s,y)-u(s,y)\right\|_k
\end{align*}
By property of Picard iteration, $I_{5,n}^{(1)}$ tends to $0$ as $n\rightarrow \infty$ uniformly in $\alpha$. For any $\epsilon>0$ there exist $N\in \mathbb{N}$ , $I_{5,n}^{(1)}<\epsilon/4$ for any $\alpha\in I$ and $n>N$. For any $i\leq N$ fixed, $I_{5,i}^{(1)}$ tends to $0$ as $\alpha$ goes to $1$ by DCT. There exists $\delta_i$ such that $I_{5,i}^{(1)}<\epsilon/4$ for any $\alpha\in (\alpha_0-\delta_i,\alpha_0+\delta_i)\bigcup I$. Also by DCT, there exists $\delta_0$ such that $I_5^{(2)}<\epsilon/4$ for any $\alpha\in (\alpha_0-\delta_0,\alpha_0+\delta_0)\bigcup I$. Define $\delta^{**}=\min_{0\leq i\leq N}\delta_i$, we have $I_5\leq \epsilon/2$, uniformly for all $n\in\mathbb{N}$. Take $\delta=\delta^*\wedge\delta^{**}$
For any $\alpha\in (\alpha_0-\delta,\alpha_0+\delta)\bigcup I$, we have
\begin{align*}
    &e^{-\gamma t}\left\|u_\alpha^{(n+1)}(t,x)-u^{(n+1)}(t,x)\right\|_k\\\
    &\quad \leq e^{-\gamma t}I_1+e^{-\gamma t}I_2+I_4^{(1)}+e^{-\gamma t}I_4^{(2)}+I_5\\
    &\quad\leq \frac{C}{\gamma}\calN_{\gamma,k}(u_\alpha^{(n)}-u^{(n)})+\frac{C}{\sqrt{\gamma}}\calN_{\gamma,k}(u_\alpha^{(n)}-u^{(n)})+\epsilon\,.
\end{align*}
We may choose $\gamma$ large enough, for example, $\gamma= \max\{4C,(4C)^2,\gamma_1,2\}$ and take supremum over $t\in [0,T]$ and over $x\in \R$,
\begin{align*}
    \calN_{\gamma,k}(u_\alpha^{(n+1)}-u^{(n+1)})\leq \frac{1}{2}\calN_{\gamma,k}(u_\alpha^{(n)}-u^{(n)})+\epsilon\,,
\end{align*}
and 
\begin{align}
    \calN_{\gamma,k}(u_\alpha^{(n+1)}-u^{(n+1)})\leq \left(\frac{1}{2}\right)^{n+1}\calN_{\gamma,k}(u_\alpha^{(0)}-u^{(0)})+\sum_{i=0}^{n}\left(\frac{1}{2}\right)^{i}\epsilon\,.
\end{align}
Let $n$ goes to infinity we have the convergence in the norm $\calN_{\gamma,k}$. Notice again the factor $e^{-\gamma t}$ is bounded in $[e^{-\gamma T},1]$, the proof is completed.
\end{proof}
Now we can see the above result implies the convergence in finite dimensional law. Choose arbitrary finite pairs $\{(t_i,x_i)\}_{i=1}^l$, and for all fixed $\epsilon_0>0$, by Chebyshev's inequality,
\begin{align*}
    \mathbb{P}(\sum_{i=1}^l (u_\alpha(t_i,x_i)-u(t_i,x_i))^2>\epsilon_0^2)&\leq \sum_{i=1}^l \mathbb{P}(|u_{\alpha}(t_i,x_i)-u(t_i,x_i)|>\epsilon_0)\\
    &\leq \sum_{i=1}^l\frac{\E(u_{\alpha}(t_i,x_i)-u(t_i,x_i))^2}{\epsilon_0^2}\,.
\end{align*}
By taking $\alpha\xrightarrow{}\alpha_0$ and using the equality \eqref{equation: convergence in k-th norm} with $k=2$, we have the finite dimensional convergence in probability, hence, 
\begin{align*}
    \left(u_\alpha(t_1,x_1),u_\alpha(t_2,x_2),\dots, u_\alpha(t_,x_l)\right)\xrightarrow{d}\left(u(t_1,x_1),u(t_2,x_2),\dots ,u(t_l,x_l)\right)\,,
\end{align*}
where the notation $\xrightarrow[]{d}$ means convergence in distribution.

\subsection{Proof of Theorem 1.1}
We need the following theorem to show the tightness of the probability measures induced by $\{u_\alpha\}$, in the space $C([0,T]\times \R)$ equipped with supremum norm over compact sets.
\begin{theorem}\cite[Theorem 2.7]{Giordano2020Bernoulli}
    Let $\{X_\lambda\}_{\lambda\in \Lambda}$ be a family of random functions indexed by the set $\Lambda$ and taking values in the space $C([0,T]\times\R)$, in which we consider the metric of uniform convergence over compact sets. Then, the family $\{X_\lambda\}_{\lambda\in \Lambda}$ is tight if, for any compact set $J\subset \R$, there exist $p',p>0,\delta>2,$ and a constant $C$ such that the following holds for any $t,t'\in [0,T]$ and $x,x'\in J$:
    \begin{enumerate}
        \item $\sup_{\lambda\in\Lambda}\E[|X_\lambda(0,0)|^{p'}]<\infty$\,,
        \item $\sup_{\lambda\in\Lambda}\E[|X_\lambda(t',x')-X_\lambda(t,x)|^{p}]<C\left(|t'-t|+|x'-x|\right)^\delta$\,.
    \end{enumerate}
\end{theorem}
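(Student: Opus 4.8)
The statement is a field-valued Kolmogorov--Chentsov tightness criterion, and the plan is to prove it by reducing to compact spatial windows and then applying the Garsia--Rodemich--Rumsey (GRR) inequality on each window. First I would fix the exhaustion of $[0,T]\times\R$ by the compact rectangles $K_m:=[0,T]\times[-m,m]$, $m\in\bbN$, and recall that the topology of uniform convergence on compact sets on $\mathcal{C}=C([0,T]\times\R)$ is generated by the metric $d(f,g)=\sum_{m\geq1}2^{-m}\bigl(\rho_m(f-g)\wedge1\bigr)$, where $\rho_m(h):=\sup_{z\in K_m}|h(z)|$. With this description $\mathcal{C}$ is the projective limit of the Polish spaces $C(K_m)$ under the restriction maps, so tightness of $\{X_\lambda\}_{\lambda\in\Lambda}$ in $\mathcal{C}$ follows once I show that, for each fixed $m$, the family of laws of the restrictions $X_\lambda|_{K_m}$ is tight in $C(K_m)$. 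Indeed, given $\varepsilon>0$ and relatively compact sets $\calK_m\subset C(K_m)$ with $\sup_\lambda\mathbb{P}(X_\lambda|_{K_m}\notin\calK_m)<\varepsilon 2^{-m}$, the set $\{f\in\mathcal{C}:f|_{K_m}\in\overline{\calK_m}\text{ for all }m\}$ is compact in $\mathcal{C}$ (its projections are precompact and it is closed under the consistency constraints) and carries probability at least $1-\varepsilon$ uniformly in $\lambda$.

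The heart of the argument is tightness in $C(K_m)$ for fixed $m$, which I would obtain from the GRR inequality applied to points $z=(t,x)\in K_m\subset\R^2$. Hypothesis (2) reads $\E[|X_\lambda(z')-X_\lambda(z)|^p]\leq C|z'-z|^\delta$ with $\delta>2=\dim K_m$, and this exponent gap is precisely what the method requires. Choosing the gauge $\Psi(u)=u^p$ and the modulus $q(u)=u^\gamma$ with $\gamma$ in the range $4/p<\gamma<(\delta+2)/p$ (a non-empty interval exactly because $\delta>2$), the double integral $B_\lambda:=\int_{K_m}\int_{K_m}|X_\lambda(z)-X_\lambda(z')|^p|z-z'|^{-\gamma p}\,\ud z\,\ud z'$ satisfies, by Tonelli and (2), $\E[B_\lambda]\leq C\int_{K_m}\int_{K_m}|z-z'|^{\delta-\gamma p}\,\ud z\,\ud z'\leq C_m$ uniformly in $\lambda$, the last integral being finite because $\gamma p<\delta+2$. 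The GRR inequality then yields the pathwise bound $|X_\lambda(z)-X_\lambda(z')|\leq C B_\lambda^{1/p}|z-z'|^{\kappa}$ with Hölder exponent $\kappa=\gamma-4/p>0$, so that the Hölder seminorm satisfies $[X_\lambda]_{\kappa,K_m}\leq C B_\lambda^{1/p}$.

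It remains to control the sup norm, for which I would anchor at the origin using hypothesis (1): for $z\in K_m$ one has $|X_\lambda(z)|\leq|X_\lambda(0,0)|+[X_\lambda]_{\kappa,K_m}\,\mathrm{diam}(K_m)^\kappa$. Markov's inequality applied to (1) gives $\mathbb{P}(|X_\lambda(0,0)|>R)\leq C R^{-p'}$ uniformly in $\lambda$, while Markov's inequality applied to $B_\lambda$ together with the GRR bound gives $\mathbb{P}([X_\lambda]_{\kappa,K_m}>H)\leq C_m H^{-p}$ uniformly in $\lambda$; note that both estimates are valid for every $p,p'>0$, so no moment-integrability subtleties arise. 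Consequently, for $R,H>0$ the set $\calK_m(R,H):=\{f\in C(K_m):\rho_m(f)\leq R,\ [f]_{\kappa,K_m}\leq H\}$ is relatively compact in $C(K_m)$ by the Arzelà--Ascoli theorem (uniform boundedness plus the uniform modulus of continuity supplied by the seminorm constraint), and combining the two tail bounds through the displayed decomposition of $|X_\lambda(z)|$ produces, for $R,H$ large enough, $\sup_\lambda\mathbb{P}(X_\lambda|_{K_m}\notin\calK_m(R,H))<\varepsilon 2^{-m}$. This delivers the per-window tightness and, through the reduction above, the theorem.

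The main obstacle I anticipate is the GRR step: verifying that the admissible window $4/p<\gamma<(\delta+2)/p$ is non-empty -- which is where the hypothesis $\delta>2$ enters essentially and which explains why $2$ is the critical exponent for a two-dimensional index set -- and then extracting from GRR a modulus-of-continuity bound uniform over the entire family $\{X_\lambda\}_{\lambda\in\Lambda}$ rather than for a single field. The projective-limit reduction, though standard, also demands the slightly delicate identification of the compact subsets of $\mathcal{C}$ in the uniform-on-compacts topology via Arzelà--Ascoli on each $K_m$.
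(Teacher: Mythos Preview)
The paper does not prove this statement: it is quoted verbatim as \cite[Theorem 2.7]{Giordano2020Bernoulli} and used as a black box to verify tightness of $\{u_\alpha\}$, so there is no in-paper argument to compare against.

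Your proposal is a correct and standard proof of this Kolmogorov--Chentsov type criterion. The reduction to the rectangles $K_m=[0,T]\times[-m,m]$ via the projective description of the compact-open topology is the right first move, and the GRR computation is accurate: the window $4/p<\gamma<(\delta+2)/p$ is non-empty precisely when $\delta>2$, which is exactly the dimensional constraint for a two-parameter field, and the resulting uniform bound $\E[B_\lambda]\leq C_m$ transfers to a uniform tail bound on the H\"older seminorm. The anchoring at $(0,0)\in K_m$ via hypothesis (1) then controls the sup norm, and Arzel\`a--Ascoli closes the argument. One minor remark: your sentence ``both estimates are valid for every $p,p'>0$'' is slightly misleading---the hypotheses supply specific $p,p'$, not all of them---but the Markov-inequality tail bounds you actually use only require the given exponents, so nothing is wrong. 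This is essentially the same route taken in the cited reference (and in most textbook proofs of multiparameter tightness criteria), so you are not deviating from the intended argument, merely supplying what the present paper outsources.
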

Thanks to Section~\ref{Holder regularity}, we can easily see the conditions for tightness are fulfilled by choosing for example $p'=1$ and $p=4$.
The following theorem is an adoption of Theorem 4.15 in \cite{karatzas.shreve:91:brownian} with a trivial modification.
\begin{theorem}\label{finite dim+tight}
    Let $\{X^{(n)}(t,x)\}_{n=1}^\infty$ be a tight sequence of continuous random fields on $[0,T]\times \R$ with the property that, whenever $\{(t_i,x_i)\}_{i=1}^N\subset [0,T]\times \R$, then the sequence of random vectors $\{X^{(n)}(t_1,x_1),\dots,X^{(n)}(t_N,x_N)\}_{n=1}^\infty$ converges in distribution. Let $P_n$ be the measure induced on $(C([0,\infty)\times \R),\mathscr{B}(C([0,\infty)\times \R)))$ by $X^{(n)}$. Then $\{P_n\}_{n=1}^\infty$ converges weakly to a measure $P$, under which the coordinate mapping process $W_{t,x}(\omega):=\omega(t,x)$ on $C([0,\infty)\times\R)$ satisfies
    \begin{align*}
        \left(X^{(n)}(t_1,x_1),\dots,X^{(n)}(t_N,x_N)\right)\xrightarrow[]{d}\left(W_{t_1,x_1},\dots,W_{t_N,x_N}\right),\forall \{(t_i,x_i)\}_{i=1}^N\subset [0,T]\times \R\,, N\geq 1\,,
    \end{align*}
    where the notation $\xrightarrow[]{d}$ means converge in distribution.
\end{theorem}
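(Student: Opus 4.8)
The plan is to reproduce, in the two–parameter setting, the classical argument behind Theorem~4.15 of \cite{karatzas2014brownian}; the passage from the one–parameter path space to $E:=C([0,\infty)\times\R)$ is purely notational. Equip $E$ with the metric
\begin{equation*}
    \rho(f,g)=\sum_{m=1}^{\infty}2^{-m}\,\frac{\|f-g\|_{m}}{1+\|f-g\|_{m}}\,,\qquad \|h\|_{m}:=\sup_{(t,x)\in[0,m]\times[-m,m]}|h(t,x)|\,;
\end{equation*}
then $(E,\rho)$ is a complete separable metric space and $\rho$-convergence is exactly uniform convergence on compact subsets of $[0,\infty)\times\R$. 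I would first record two elementary topological facts about $E$: (i) for each finite family $(t_1,x_1),\dots,(t_N,x_N)$ the evaluation map $\pi\colon E\to\R^N$, $\omega\mapsto(\omega(t_1,x_1),\dots,\omega(t_N,x_N))$, is continuous, so finite–dimensional distributions of $E$-valued random elements are well defined; and (ii) $\mathscr B(E)$ is generated by the evaluation maps $\omega\mapsto\omega(t,x)$, with $(t,x)$ ranging over a countable dense subset of $[0,\infty)\times\R$, whence two Borel probability measures on $E$ with the same finite–dimensional distributions coincide.

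Granting this, the proof is the standard "tightness $+$ identification of limit points" scheme. By tightness of $\{P_n\}$ and Prohorov's theorem on the Polish space $E$, $\{P_n\}$ is relatively compact in the weak topology, so every subsequence has a weakly convergent sub-subsequence. Let $P$ and $Q$ be two subsequential limits, $P_{n_k}\Rightarrow P$ and $P_{m_j}\Rightarrow Q$. Fix points $(t_i,x_i)$, $i=1,\dots,N$, and the corresponding evaluation $\pi$. The continuous mapping theorem gives $P_{n_k}\circ\pi^{-1}\Rightarrow P\circ\pi^{-1}$ and $P_{m_j}\circ\pi^{-1}\Rightarrow Q\circ\pi^{-1}$ in $\R^N$; but $P_n\circ\pi^{-1}$ is the law of $(X^{(n)}(t_1,x_1),\dots,X^{(n)}(t_N,x_N))$, which by hypothesis converges to a fixed law $\nu_{(t_1,x_1),\dots,(t_N,x_N)}$ on $\R^N$. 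Uniqueness of weak limits in $\R^N$ forces $P\circ\pi^{-1}=Q\circ\pi^{-1}=\nu_{(t_1,x_1),\dots,(t_N,x_N)}$. Since the points were arbitrary, fact (ii) yields $P=Q$: all subsequential limits coincide, and hence (in a metric space, if every subsequence has a further subsequence converging to the same limit $P$, the full sequence converges to $P$) $P_n\Rightarrow P$. Finally, writing $W_{t,x}(\omega)=\omega(t,x)$, the identity $P\circ\pi^{-1}=\nu_{(t_1,x_1),\dots,(t_N,x_N)}=\lim_n\operatorname{law}\bigl(X^{(n)}(t_1,x_1),\dots,X^{(n)}(t_N,x_N)\bigr)$ says precisely that $\bigl(X^{(n)}(t_1,x_1),\dots,X^{(n)}(t_N,x_N)\bigr)\xrightarrow{d}(W_{t_1,x_1},\dots,W_{t_N,x_N})$ for every finite family, which is the conclusion.

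The only step that is not completely mechanical is establishing the topological facts (i)–(ii): that $C([0,\infty)\times\R)$ with uniform-on-compacts convergence is Polish (needed so that Prohorov's theorem applies and "tight $\Rightarrow$ relatively compact"), and that its Borel $\sigma$-algebra is generated by the coordinate evaluations (needed so that finite–dimensional distributions separate measures). Both are classical and are proved exactly as for $C([0,\infty))$: completeness is immediate from the definition of $\rho$, separability follows by exhausting $[0,\infty)\times\R$ by the compact rectangles $[0,m]\times[-m,m]$ and approximating on each by, say, piecewise-affine functions with rational data, and the $\sigma$-algebra statement then follows since a countable family of continuous maps separating points generates the Borel $\sigma$-algebra of a separable metric space. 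Nothing here depends on the parameter set being one– rather than two–dimensional, so the modification of \cite[Theorem~4.15]{karatzas2014brownian} is indeed trivial; the genuine analytic input — the convergence of finite–dimensional laws and the tightness — has already been supplied by Theorem~\ref{convergence in k-th norm} and Section~\ref{Holder regularity}.
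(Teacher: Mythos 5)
Your argument is correct and is essentially the same as what the paper relies on: the paper gives no proof of this statement, merely citing Theorem~4.15 of \cite{karatzas2014brownian} as requiring only a trivial modification, and your write-up is precisely that classical tightness-plus-finite-dimensional-identification argument (Prohorov on the Polish space $C([0,\infty)\times\R)$ with the uniform-on-compacts metric, uniqueness of subsequential limits via coincidence of finite-dimensional distributions, and the subsequence principle), correctly transported to the two-parameter setting. No gaps to report.
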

So far we have proved the convergence of finite dimensional distribution and tightness. We can also see that each $u_\alpha$ has a continuous modification by the H\"{o}lder continuity in Section~\ref{Holder regularity} and Kolmogorov continuity theorem. As a result of Theorem~\ref{finite dim+tight}, together with the uniqueness of weak convergence, for any sequence $\{\alpha_n\}_{n=1}^\infty$ with limit $1$, $u_{\alpha_{n}}$ converge weakly to $u$. Thus, we have completed the proof of Theorem~\ref{main theorem}.

\section{Validation of the examples}\label{Validation of the examples}
We end this paper by verifying the examples in Example~\ref{examples} case by case.
\begin{enumerate}
    \item First of all, let $f_\alpha$ be given by the Riesz kernels \eqref{Riesz kernels}. The Fourier transform of $f_\alpha$ is then given by
    \begin{align*}
            \hat{f}_\alpha(\xi)=\frac{1}{|\xi|^{1-\alpha}}\to \hat{f}_{\alpha_0}(\xi)\,,\quad \text{as} \quad\alpha\to \alpha_0\neq 0\,.
        \end{align*}
        In the special case, $\hat{f}_\alpha\to 1=\hat{\delta}$ as $\alpha\to 1$. In addition, 
    We have $f_\alpha=h_\alpha\ast h_\alpha$ where $h_\alpha=c_{\frac{1-\alpha}{2}}g_{\frac{1+\alpha}{2}}$ and $\hat{f}_\alpha=\hat{h}_\alpha^2$. For any sequence $\{\alpha_n\}$ with limit $\alpha_0\neq 0$, the point-wise convergence of $\hat{f}_\alpha\to\hat{f}_{\alpha_0}$ holds for any $\xi\neq 0$ with $\hat{f}_\alpha(\xi)\leq 1$ uniformly for all $|\xi|\geq 1$ Let $\alpha_l=\inf_n{\alpha_n}>0$. For any compact set $K\subset \R$,
\begin{align*}
    \int_K|\hat{f}_\alpha-\hat{f}_{\alpha_0}|(\xi)\ud \xi\leq 2\left(\int_{K\bigcap[-1,1]}\frac{1}{|\xi|^{1-\alpha_l}}\ud \xi\,\vee |K|\right)\,.
\end{align*}
Thus, by Dominated Convergence Theorem (DCT), $\hat{f}_\alpha\to\hat{f}_{\alpha_0}$ in $L^1_{loc}(\R)$.
Take $\beta=\frac{1}{4}$, we have
\begin{align*}
    \sup_{\alpha}\int_\R\frac{\hat{f}_\alpha(\xi)}{(1+|\xi|^2)^{{1-\beta}}}\ud \xi \leq \int_{[-1,1]}\frac{1}{|\xi|^{1-\alpha_l}}\ud \xi +\int_{\R\setminus[-1,1]}\frac{1}{(|\xi|^2)^{\frac{3}{4}}}\ud \xi< \infty\,,
\end{align*}
which verifies \eqref{improved dalang's condition}.
\item Let $f_\alpha$ be given by \eqref{Heat kernels} and $\hat{f}_\alpha(\xi)=e^{-\alpha|\xi|^2}\to 1=\hat{\delta}(\xi)$ as $\alpha$ goes to 0. We have  $f_\alpha=h_\alpha\ast h_\alpha $ with $h_\alpha=f_{\frac{\alpha}{2}}$. Also, we have the uniform bound $\hat{f}_\alpha(\xi)\leq 1$ and point-wise convergence of $\hat{f}_\alpha\to \hat{f}_{\alpha_0}$ for any $\alpha_0>0$. Noting that
\begin{align*}
    \int_K|\hat{f}_\alpha-\hat{f}_{\alpha_0}|(\xi)\ud \xi\leq 2|K|\,.
\end{align*}
The convergence $\hat{f}_\alpha$ to $\hat{f}_{\alpha_0}$ holds in $L^1_{loc}(\R)$ by DCT.
Again, by noting that $\hat{f}_\alpha(\xi)\leq 1$,take $\beta=\frac{1}{4}$ and we have
\begin{align*}
    \sup_{\alpha}\int_\R\frac{\hat{f}_\alpha(\xi)}{(1+|\xi|^2)^{{1-\beta}}}\ud \xi \leq\int_{[-1,1]}1\ud \xi+ \int_{\R\setminus[-1,1]} \frac{1}{|\xi|^{\frac{3}{2}}}\ud \xi <\infty\,,
\end{align*}
which verifies \eqref{improved dalang's condition}.
\item By the semigroup property, $f_\alpha=f_{\alpha/2}\ast f_{\alpha/2}$. The point-wise convergence and uniform boundedness of $\hat{f}_\alpha(\xi)=\frac{1}{(1+|\xi|^2)^{\frac{\alpha}{2}}}$ on $\R$ are trivial. Also, by DCT, we have the convergence of $\hat{f}_\alpha\to \hat{f}_{\alpha_0}$ in $L^1_{loc}(\R)$ for any $\alpha_0>0$. Again, by noting that  $\hat{G}_\alpha(\xi)\leq 1$, we can show \eqref{improved dalang's condition} with the same step as in the second example.
\item Let $f_\alpha$ be given in \eqref{general example}. We can still prove the uniform boundedness of $\hat{f}_\alpha$ on $\R$ by noting
        \begin{align*}
            \hat{f}_\alpha(\xi)=\left|\int_\R \rho(x)e^{-ia\xi x}\ud x\right|^2\leq \|\rho\|_{L^1(\R)}^2=1\,.
        \end{align*}
        And the improved Dalang's condition \eqref{improved dalang's condition} and the $L^1_{loc}(\R)$ convergence follow immediately.
        Since $\rho\in L^1(\R)$, by DCT, for any $\xi\in\R$, we have the point-wise convergence
        \begin{align*}
            \hat{f}_\alpha(\xi)=\left|\int_\R \rho(x)e^{-ia\xi x}\ud x\right|^2\to\left|\int_\R \rho(x)e^{-ia_0\xi x}\ud x\right|^2=\hat{f}_{\alpha_0}(\xi)\,,
        \end{align*}
        as $\alpha$ goes to $\alpha_0$ for any $\alpha_0>0$.
\end{enumerate}
\section{Appendix}\label{Appendix}
In this section we present some basic and detailed calculations about the wave kernel $G$.
\begin{lemma}\label{esitimation of the Fourier wave kernel}
Let $G_t$ be given by \eqref{wave kernel} and thus $  \mathcal{F}G_t(\xi)=\frac{\sin t|\xi|}{|\xi|}$. Then
    \begin{align*}
        \left|\mathcal{F}G_t(\xi)\right|^2\leq \frac{2}{1+|\xi|^2}(t^2\vee 1)\,.
    \end{align*}
\end{lemma}
\begin{proof}
If $\xi\leq 1$, then
    \begin{align*}
        \frac{1+|\xi|^2}{|\xi|^2}\sin^2(t|\xi|)\leq t^2(1+|\xi|^2)\leq 2t^2\,.
    \end{align*}
And if $\xi>1$, we have
    \begin{align*}
        \frac{1+|\xi|^2}{|\xi|^2}\sin^2(t|\xi|)\leq 1+\frac{1}{|\xi|^2}\leq 2\,.
    \end{align*}
\end{proof}
\begin{lemma}\label{spatial holder 1}
Let $G_t(x)$ be the wave kernel \eqref{wave kernel}. For any $x,x'\in \R$ and $t\in [0,T]$, there exists a constant $C$ such that
    \begin{equation}\label{int G-G}
        \int_0^t\int_\R \left|G_s(x-y)-G_s(x'-y)\right|\ud y\ud s\leq  C(|x-x'|\wedge 1).
    \end{equation}
\end{lemma}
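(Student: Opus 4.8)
The plan is to use the explicit form of the wave kernel. Since $G_s(x-y)=\tfrac12\mathds{1}_{(|x-y|<s)}=\tfrac12\mathds{1}_{(x-s,\,x+s)}(y)$, for each fixed $s$ the inner integral is half the Lebesgue measure of the symmetric difference of the two intervals $I:=(x-s,x+s)$ and $I':=(x'-s,x'+s)$, because $\bigl|\mathds{1}_{I}(y)-\mathds{1}_{I'}(y)\bigr|=\mathds{1}_{I\triangle I'}(y)$ pointwise. First I would dispose of the trivial case $x=x'$ and, by symmetry, assume $x<x'$ and set $h:=|x-x'|>0$; the intervals $I,I'$ then both have length $2s$ and differ by the translation $h$. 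A direct computation of (length of union) minus (length of intersection), distinguishing the overlapping case $h<2s$ (symmetric difference $2h$) from the disjoint case $h\ge 2s$ (symmetric difference $4s$), yields the exact identity
\begin{align*}
    \int_\R\bigl|G_s(x-y)-G_s(x'-y)\bigr|\,\ud y=\tfrac12\,\bigl|I\triangle I'\bigr|=\min(h,2s)\,.
\end{align*}

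Next I would integrate this identity in $s$ over $[0,t]$. Splitting the range at $s=h/2$ gives
\begin{align*}
    \int_0^t\min(h,2s)\,\ud s=\int_0^{\,t\wedge(h/2)}2s\,\ud s+\int_{t\wedge(h/2)}^{\,t}h\,\ud s\,,
\end{align*}
which is an elementary quantity in either regime $t\le h/2$ or $t>h/2$. Two crude bounds suffice: on one hand the integrand never exceeds $h$, so the integral is at most $ht\le hT$; on the other hand, viewed as a function of $h$ (with $t$ fixed), the integral is increasing and bounded by its value at $h=2t$, namely $t^2\le T^2$. Hence $\int_0^t\int_\R|G_s(x-y)-G_s(x'-y)|\,\ud y\,\ud s\le\min(Th,\;T^2)$. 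When $|x-x'|\le1$ this is $\le T\,|x-x'|$, and when $|x-x'|>1$ it is $\le T^2$, so in both cases it is bounded by $C(|x-x'|\wedge1)$ with $C=\max(T,T^2)$.

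There is no genuine obstacle in this lemma; it is an exact computation followed by a size estimate. The only point requiring a little care is the bookkeeping of the two case distinctions — overlapping versus disjoint intervals inside the $s$-integral, and $|x-x'|\le1$ versus $|x-x'|>1$ at the end — so that the conclusion truly features the minimum with $1$ and a constant $C$ that is uniform in $t,x,x'$ (and, in particular, independent of $\alpha$, which is exactly what its uses in the proofs of Proposition~\ref{holder} and Theorem~\ref{convergence in k-th norm} require).
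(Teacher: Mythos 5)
Your proof is correct and follows essentially the same route as the paper: an explicit evaluation of the inner $y$-integral from the indicator structure of $G_s$ (distinguishing overlapping from disjoint intervals, i.e.\ $s$ versus $|x-x'|/2$), followed by integration in $s$ and a separate uniform-in-$(x,x')$ bound to produce the minimum with $1$. Your formulation via the symmetric difference, giving the clean identity $\int_\R |G_s(x-y)-G_s(x'-y)|\,\ud y=\min(|x-x'|,2s)$, is a slightly tidier packaging of the same computation, but not a genuinely different argument.
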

\begin{proof}
We may assume $x'\geq x$, by \eqref{wave kernel},
\begin{equation}\label{G-G}
    \left|G_s(x-y)-G_s(x'-y)\right|=\left\{
    \begin{aligned}
        &\frac{1}{2}\left(\mathds{1}_{\{|x-y|<s\}}(y)+\mathds{1}_{\{|x'-y|<s)\}}(y)\right)\,,\quad &&s\leq \frac{|x'-x|}{2}\,,\\
        &\frac{1}{2}\left(\mathds{1}_{\{x-s<y<x'-s\}}(y)+\mathds{1}_{\{x+s<y<x'+s\}}(y)\right)\,,\quad &&s> \frac{|x'-x|}{2}\,.
    \end{aligned}
    \right.
\end{equation}
When $t\leq \frac{|x'-x|}{2}$,
\begin{align*}
    \int_0^t\int_\R \left|G_s(x-y)-G_s(x'-y)\right|\ud y\ud s=\int_0^t 2s \ud s=2t^2\leq t\frac{|x'-x|}{2}\leq C|x'-x|\,.
\end{align*}
When $t>\frac{|x'-x|}{2}$,
\begin{align*}
    \int_0^t\int_\R \left|G_s(x-y)-G_s(x'-y)\right|\ud y\ud s&=\frac{|x-x'|^2}{4}+\int_{\frac{|x'-x|}{2}}^t |x'-x|\ud s\\
    &=|x-x'|\left(t-\frac{|x-x'|}{4}\right)\leq \frac{t}{2}|x-x'|\leq C|x'-x|.
\end{align*}
Also, we observe that the left hand side of \eqref{int G-G} is bounded, since
\begin{align*}
    &\int_0^t\int_\R \left|G_s(x-y)-G_s(x'-y)\right|\ud y\ud s\leq \int_0^t \frac{1}{2}\cdot 2(2s+2s)\ud s\leq 2T^2\leq C\,.
\end{align*}
Thus, \eqref{int G-G} is proved.
    \end{proof}

\begin{lemma}\label{time holder 1}
Let $G_t(x)$ be the wave kernel \eqref{wave kernel}. For any $x\in\R$ and $t,t'\in [0,T]$, there exists a constant $C$ such that
    \begin{equation}
        \int_0^t\int_\R \left|G_{s+|t'-t|}(x-y)-G_s(x-y)\right|\ud y\ud s\leq  C(|t'-t|\wedge 1).
    \end{equation}
\end{lemma}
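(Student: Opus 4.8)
The plan is to mimic the proof of Lemma~\ref{spatial holder 1}, exploiting the explicit form of the wave kernel $G_s(x)=\frac12\mathds{1}_{\{|x|<s\}}$. Write $h=|t'-t|$; without loss of generality $t'\geq t$, so $s+h$ denotes the larger time. First I would compute the pointwise difference $\left|G_{s+h}(x-y)-G_s(x-y)\right|$ as a function of $y$ for fixed $s$. Since $G_s(x-\cdot)$ is the indicator (times $\tfrac12$) of the interval $(x-s,x+s)$ and $G_{s+h}(x-\cdot)$ is the indicator of the strictly larger interval $(x-s-h,x+s+h)$, their difference is $\tfrac12$ on the symmetric difference $(x-s-h,x-s]\cup[x+s,x+s+h)$ and $0$ elsewhere. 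Hence for every $s\geq 0$,
\begin{align*}
    \int_\R \left|G_{s+h}(x-y)-G_s(x-y)\right|\ud y=\frac12\cdot 2h=h\,.
\end{align*}

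Integrating this over $s\in[0,t]$ gives immediately
\begin{align*}
    \int_0^t\int_\R \left|G_{s+h}(x-y)-G_s(x-y)\right|\ud y\ud s=\int_0^t h\,\ud s=th\leq T|t'-t|\,,
\end{align*}
which already yields a bound $C|t'-t|$ with $C=T$. To upgrade this to the claimed bound $C(|t'-t|\wedge 1)$ I would separately note the crude uniform estimate: since both $G_{s+h}(x-\cdot)$ and $G_s(x-\cdot)$ are bounded by $\tfrac12$ and supported in intervals of length at most $2(T+h)\leq 2(2T)$ (using $h\le T$; if $h>T$ the statement is vacuous after intersecting with the $\min$), the left-hand side is bounded by $\int_0^T \tfrac12\cdot 2\cdot 2(T+h)\ud s\leq C$ for a constant $C$ depending only on $T$. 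Combining the two bounds gives the left-hand side $\leq C\min(|t'-t|,1)$, as required.

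I do not expect any genuine obstacle here: the wave kernel in one spatial dimension is an indicator function, so every integral is elementary and the only care needed is bookkeeping the interval endpoints and handling the trivial case $|t'-t|\geq 1$ (resp.\ $|t'-t|>T$) via the uniform bound. The one point worth stating carefully is that the symmetric-difference computation is valid for all $s\geq 0$ including small $s$ (there is no phase transition analogous to the $s\lessgtr |x'-x|/2$ split in Lemma~\ref{spatial holder 1}, because the two time-dilated intervals are always nested), which is why the time estimate is in fact cleaner than the spatial one.
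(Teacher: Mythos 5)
Your proposal is correct and follows essentially the same route as the paper: the nested-interval observation gives $\int_\R |G_{s+h}(x-y)-G_s(x-y)|\,\ud y = h$ exactly, which after integrating in $s$ yields the bound $T|t'-t|$, and the crude uniform bound in $T$ supplies the $\wedge\,1$. The paper's proof is just the same computation written as a single chain of inequalities.
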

\begin{proof}
A direct consequence of \eqref{wave kernel} gives
    \begin{align*}
        \int_0^t\int_\R \left|G_{s+|t'-t|}(x-y)-G_s(x-y)\right|\ud y\ud s =\int_0^t |t'-t|\ud s\leq T|t'-t|\leq 2T^2\leq C\,.
    \end{align*}
\end{proof}
\begin{lemma}\label{spatial holder}
Let $G_s(x)$ be the wave kernel \eqref{wave kernel} and $f_\alpha$ be the covariance function. For any $x,x'\in \R$ and $t\in [0,T]$ there exists a constant $C$ such that
    \begin{equation}
        \begin{aligned}
            \mathcal{A}_{\alpha}:=&\int_0^t\int_\R\int_\R \left|G_{s}(x-y)-G_s(x'-y)\right|\\
            &\qquad \times f_\alpha(y-z)\left|G_{s}(x-z)-G_s(x'-z)\right|\ud y \ud z \ud s\leq C(|x'-x|\wedge 1)^{2\beta}\,.
        \end{aligned}
    \end{equation}
\end{lemma}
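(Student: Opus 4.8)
The plan is to pass to Fourier variables. By Lemma~\ref{fourier mode} applied with the (signed, finite, compactly supported) measure $\mu(\ud y) = \left(G_s(x-y)-G_s(x'-y)\right)\ud y$ — which is legitimate since $G_s(x-\cdot)-G_s(x'-\cdot)$ is bounded with compact support and lower semicontinuity is not an obstruction for this explicit indicator-difference — the triple integral $\mathcal{A}_\alpha$ equals
\begin{align*}
    \mathcal{A}_\alpha = \frac{1}{2\pi}\int_0^t\int_\R g_{1-\alpha}(\xi)\left|\calF G_s(\xi)\right|^2\left|e^{-ix\xi}-e^{-ix'\xi}\right|^2\ud\xi\,\ud s\,,
\end{align*}
where I used that $\calF\!\left(G_s(x-\cdot)-G_s(x'-\cdot)\right)(\xi) = \calF G_s(\xi)\left(e^{-ix\xi}-e^{-ix'\xi}\right)$. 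Now $\left|e^{-ix\xi}-e^{-ix'\xi}\right|^2 = 2(1-\cos((x-x')\xi)) \le \min\{4,\ (x-x')^2\xi^2\}$, and the explicit wave kernel gives $\left|\calF G_s(\xi)\right|^2 = \left|\frac{\sin(s\xi)}{\xi}\right|^2 \le \min\{s^2,\ \xi^{-2}\}$. Writing $h = |x-x'|$, the task reduces to bounding
\begin{align*}
    \int_0^t\int_\R |\xi|^{\alpha-1}\min\{s^2,\xi^{-2}\}\min\{4, h^2\xi^2\}\,\ud\xi\,\ud s \le C\, h^{2-\alpha}\,.
\end{align*}

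The key step is the $\xi$-integral, which I would split at $|\xi| = 1/h$. For $|\xi| \le 1/h$ use $\min\{4,h^2\xi^2\} = h^2\xi^2$ and $\min\{s^2,\xi^{-2}\}\le \xi^{-2}$ (or $\le s^2$ near $0$, but $\xi^{-2}$ already suffices away from the origin while integrability at $0$ is fine since $|\xi|^{\alpha-1+2-2}=|\xi|^{\alpha-1}$ is integrable near $0$ for $\alpha>0$); this contributes $\lesssim h^2 \int_0^{1/h}|\xi|^{\alpha-1}\ud\xi \asymp h^2 \cdot h^{-\alpha} = h^{2-\alpha}$. For $|\xi| \ge 1/h$ use $\min\{4,h^2\xi^2\}=4$ and $\min\{s^2,\xi^{-2}\} \le \xi^{-2}$; this contributes $\lesssim \int_{1/h}^\infty |\xi|^{\alpha-3}\ud\xi \asymp h^{2-\alpha}$, using $\alpha-3<-1$. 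Then integrate in $s$ over $[0,t]\subseteq[0,T]$, which only multiplies by a constant (the $s^2$ bound is needed only to control the region near $\xi=0$ uniformly; alternatively bound $\left|\calF G_s(\xi)\right|^2\le s^2\le T^2$ there). Finally, the elementary a priori bound $\mathcal{A}_\alpha \le \Norm{f_\alpha}\cdot(\text{stuff})$ is not quite available since $f_\alpha\notin L^1$, so instead I note $\mathcal{A}_\alpha$ is trivially bounded by replacing each $\left|G_s(x-y)-G_s(x'-y)\right|$ by $G_s(x-y)+G_s(x'-y)$ and using $\sup_x\int_\R\int_\R G_s(x-y)f_\alpha(y-z)G_s(x-z)\ud y\ud z = 2^{1-\alpha}s^{2-\alpha}A_{\alpha-1}\cdot(\text{const})$ from the cited Lemma, integrable on $[0,T]$; this handles the regime $h\ge 1$ and lets me replace $h^{2-\alpha}$ by $(h\wedge 1)^{2-\alpha}$.

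The main obstacle is bookkeeping rather than conceptual: making sure the $\min$-splittings are done so that every resulting power of $|\xi|$ is integrable at both $0$ and $\infty$, and that the constant $C$ can be taken uniform in $\alpha \in [\alpha_b,1)$ — here one checks that $A_{\alpha-1}$ and the exponents $2-\alpha$, $\alpha-1$ vary in compact ranges, so the implied constants stay bounded. One should also double-check the harmless factor-of-$2$ conventions in $G_t = \tfrac12\one_{|x|<t}$ versus the Fourier formula $\calF G_t(\xi) = \sin(t\xi)/\xi$. Everything else is the routine splitting of a one-dimensional integral.
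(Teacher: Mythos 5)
Your key identity is not correct as stated, and the error is in a direction that matters. Lemma~\ref{fourier mode} applied to the signed measure $\mu(\ud y)=\bigl(G_s(x-y)-G_s(x'-y)\bigr)\ud y$ gives
\begin{align*}
\int_\R\int_\R \bigl(G_s(x-y)-G_s(x'-y)\bigr)f_\alpha(y-z)\bigl(G_s(x-z)-G_s(x'-z)\bigr)\ud y\ud z
=\frac{1}{2\pi}\int_\R g_{1-\alpha}(\xi)\,\bigl|\calF G_s(\xi)\bigr|^2\bigl|e^{-ix\xi}-e^{-ix'\xi}\bigr|^2\ud\xi\,,
\end{align*}
i.e.\ the identity holds for the integral \emph{without} absolute values, whereas $\mathcal{A}_\alpha$ carries $\left|G_s(x-y)-G_s(x'-y)\right|$. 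Since $f_\alpha\geq 0$, the signed quantity is a \emph{lower} bound for $\mathcal{A}_\alpha$ (the difference $D_s(y):=G_s(x-y)-G_s(x'-y)$ genuinely changes sign: for $s>\frac{|x'-x|}{2}$ it equals $+\tfrac12$ on $(x-s,x'-s)$ and $-\tfrac12$ on $(x+s,x'+s)$, so the cross terms enter with opposite signs in the two quantities), hence bounding the Fourier expression by $Ch^{2-\alpha}$ does not bound $\mathcal{A}_\alpha$. This is the gap; everything downstream (the $\min$-splittings at $|\xi|=1/h$, which are correct and do yield $Ch^{2-\alpha}$, and the crude bound for $h\geq 1$) estimates the wrong object.

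The repair stays within your framework but forces you to look at $|D_s|$ itself, which is exactly the case analysis the paper performs in real space. For $s\leq\frac{|x'-x|}{2}$ the supports are disjoint and $|D_s|=G_s(x-\cdot)+G_s(x'-\cdot)$, so $\bigl|\calF|D_s|(\xi)\bigr|\leq 2\bigl|\calF G_s(\xi)\bigr|\leq 2\min\{s,|\xi|^{-1}\}$; for $s>\frac{|x'-x|}{2}$ one has $|D_s|=\tfrac12\bigl(\one_{(x-s,x'-s)}+\one_{(x+s,x'+s)}\bigr)$, two translates of an interval of length $h=|x'-x|$, so $\bigl|\calF|D_s|(\xi)\bigr|\leq\bigl|\calF\one_{(0,h)}(\xi)\bigr|\leq\min\{h,2|\xi|^{-1}\}$. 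Applying Lemma~\ref{fourier mode} to the nonnegative measure $|D_s(y)|\ud y$ and running your $\xi$-splitting on these bounds (splitting at $1/s$ in the first regime and at $1/h$ in the second, then integrating in $s$) gives $\int_0^{h/2\wedge t}Cs^{2-\alpha}\ud s+\int_0^t Ch^{2-\alpha}\ud s\leq Ch^{2-\alpha}$ for $h\leq1$, and your crude bound handles $h\geq1$. (Alternatively, keep your signed identity and estimate the sign-mixed cross term separately; the paper's computation of $K_{1,2}$ shows it is of order $h^{2}+h^{3-\alpha}$, which is admissible.) With this correction your argument is a legitimate Fourier-side alternative to the paper's explicit real-space integration of the Riesz kernel over the rectangles $A_{i,j}$, $B_{i,j}$, and it makes the uniformity in $\alpha\in[\alpha_b,1)$ transparent; but as written the first display is false and the lemma is not proved.
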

\begin{lemma}\label{time holder}
Let $G_s(x)$ be the wave kernel \eqref{wave kernel} and $f_\alpha$ be the covariance function. For any $0\leq t\leq t'\leq T$ an $x\in \R$, there exists a constant $C$ such that
    \begin{equation}
        \begin{aligned}
            \mathcal{B}_{\alpha}:=&\int_0^t\int_\R\int_\R \left|G_{s+(t'-t)}(x-y)-G_{s}(x-y)\right|\\
            &\qquad\qquad\times f_{\alpha}(y-z)\left|G_{s+(t'-t)}(x-z)-G_{s}(x-z)\right|\ud y\ud z\ud s\leq C(|t'-t|\wedge 1)^{2\beta}\,.
        \end{aligned}
    \end{equation}
\end{lemma}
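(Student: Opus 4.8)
The plan is to pass to Fourier variables, where the triple integral of the wave‑kernel increments against the Riesz kernel $f_\alpha$ collapses to a one‑dimensional integral that scales explicitly in $h:=t'-t$. First I would remove the dependence on $x$: since $G_s(x-\cdot)$ is just a translate of $G_s$, substituting $y\mapsto y+x$ and $z\mapsto z+x$ shows $\mathcal{B}_\alpha$ is independent of $x$, so I may take $x=0$. The key structural observation is that, unlike in the spatial case, the time increment of the wave kernel is sign‑definite: because $G$ is even and $s+h\ge s$,
\begin{equation*}
    G_{s+h}(y)-G_s(y)=\tfrac12\bigl(\mathds{1}_{\{|y|<s+h\}}-\mathds{1}_{\{|y|<s\}}\bigr)=\tfrac12\mathds{1}_{\{s\le|y|<s+h\}}\ge 0\,,
\end{equation*}
so the absolute values in the definition of $\mathcal{B}_\alpha$ may simply be dropped. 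For each fixed $s$ the measure $\mu_s(\ud y):=(G_{s+h}(y)-G_s(y))\,\ud y$ is finite and nonnegative, $f_\alpha$ is lower semicontinuous with $\hat f_\alpha=g_{1-\alpha}$, so Lemma~\ref{fourier mode} applies; integrating the resulting identity in $s\in[0,t]$ (Tonelli, everything nonnegative) gives
\begin{equation*}
    \mathcal{B}_\alpha=\frac{1}{2\pi}\int_0^t\int_\R\bigl|\calF(G_{s+h}-G_s)(\xi)\bigr|^2|\xi|^{\alpha-1}\,\ud\xi\,\ud s\,.
\end{equation*}

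Next I would compute $\calF G_s(\xi)=\sin(s\xi)/\xi$ under the convention \eqref{fourier transform}, so that
\begin{equation*}
    \calF(G_{s+h}-G_s)(\xi)=\frac{\sin((s+h)\xi)-\sin(s\xi)}{\xi}=\frac{2\cos((s+h/2)\xi)\,\sin(h\xi/2)}{\xi}\,,
\end{equation*}
and then bound $|\cos|\le1$ and $\int_0^t\ud s\le T$ to obtain $\mathcal{B}_\alpha\le\frac{2T}{\pi}\int_\R\sin^2(h\xi/2)\,|\xi|^{\alpha-3}\,\ud\xi$. The substitution $u=h\xi/2$ pulls out the factor $h^{2-\alpha}$ and leaves a constant multiple of $\int_\R\sin^2u\,|u|^{\alpha-3}\,\ud u$, which is finite for every $\alpha\in(0,1)$ (integrand $\sim|u|^{\alpha-1}$ near $0$, $\sim|u|^{\alpha-3}$ at infinity); in fact, applying the lemma of \cite{balan2015EJP} at exponent $\alpha-1\in(-1,0)$ shows it equals $2^{2-\alpha}A_{\alpha-1}$. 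Since $A_{\alpha-1}$ is continuous in $\alpha$, it is bounded on any interval $[\alpha_b,1)$ (with $A_{\alpha-1}\to\tfrac{\pi}{2}$ as $\alpha\to1$), so $\mathcal{B}_\alpha\le C\,h^{2-\alpha}$ with $C$ independent of $\alpha\in[\alpha_b,1)$. Finally, because $h=t'-t\le T$ one has $h^{2-\alpha}\le\max(1,T^{2-\alpha})(h\wedge1)^{2-\alpha}$, which upgrades this to $\mathcal{B}_\alpha\le C(|t'-t|\wedge1)^{2-\alpha}$, the claimed estimate.

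I do not expect a serious obstacle; the computation is routine once one is in Fourier space. The two points that need attention are (i) checking the hypotheses of Lemma~\ref{fourier mode} and, crucially, using the sign‑definiteness of $G_{s+h}-G_s$ to legitimately drop the absolute values — without this the relevant Fourier transform would be that of $|G_{s+h}-G_s|$, which is inconvenient; and (ii) keeping the constant uniform as $\alpha\to1$, which here costs nothing since bounding $|\cos|\le1$ is lossless and $A_{\alpha-1}$ stays bounded, whereas a naïve physical‑space estimate would produce a spurious $1/(1-\alpha)$ that is in reality cancelled by the normalising factor $c_{1-\alpha}$ hidden in $f_\alpha$. A fully elementary alternative would be to estimate $\mathcal{B}_\alpha$ directly in physical space, splitting the $(y,z)$‑integration according to whether $y,z$ lie in the same or opposite annuli $\pm[s,s+h]$ and using $\int_0^h\int_0^h|u-v|^{-\alpha}\,\ud u\,\ud v=\tfrac{2h^{2-\alpha}}{(1-\alpha)(2-\alpha)}$ for the same‑annulus contribution; this yields the same power $h^{2-\alpha}$ but is messier to make uniform in $\alpha$, so I would present the Fourier argument.
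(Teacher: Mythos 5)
Your proof is correct, and it takes a genuinely different route from the paper's. The paper argues entirely in physical space: for each $s$ it identifies the product of increments with the indicator of the four rectangles $C_{i,j}$ built from the annuli $\{s\le |x-y|<s+(t'-t)\}$, computes $\int_{C_{i,j}}f_\alpha(y-z)\,\ud y\,\ud z$ explicitly (the diagonal terms give the $(t'-t)^{2-\alpha}$ power, the off-diagonal ones are controlled by a second-order Taylor expansion of $r\mapsto r^{3-\alpha}$), adds a crude bound $\mathcal{B}_\alpha\le C$ and then interpolates to get $C(|t'-t|\wedge1)^{2-\alpha}$; uniformity near $\alpha=1$ comes from the boundedness of $c_{1-\alpha}/(1-\alpha)$. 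You instead exploit the sign-definiteness of the time increment, $G_{s+h}-G_s=\tfrac12\mathds{1}_{\{s\le|\cdot|<s+h\}}\ge0$, which legitimately removes the absolute values and lets Lemma~\ref{fourier mode} convert $\mathcal{B}_\alpha$ into $\frac{1}{2\pi}\int_0^t\int_\R|\calF(G_{s+h}-G_s)(\xi)|^2|\xi|^{\alpha-1}\,\ud\xi\,\ud s$; the sum-to-product identity, $|\cos|\le1$, the scaling $u=h\xi/2$ and the Balan--Song constant $A_{\alpha-1}$ then give $\mathcal{B}_\alpha\le C\,h^{2-\alpha}$ directly, with the uniformity in $\alpha$ on $[\alpha_b,1)$ built in through the continuity of $A_{\alpha-1}$. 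Your approach is shorter, avoids the Taylor-expansion bookkeeping, and is consistent with tools the paper already uses (the $H_2^{(2)}$ estimate is handled exactly this way); its one essential ingredient is the nonnegativity of the time increment, which is why the same shortcut is not available for the spatial Lemma~\ref{spatial holder} and presumably why the paper treats both lemmas by direct computation. The paper's computation, in exchange, is elementary and self-contained and also records the sharper decomposition $C(|t'-t|^2+|t'-t|^{3-\alpha}+|t'-t|^{2-\alpha})$. Your concluding step $h^{2-\alpha}\le\max(1,T^{2})(h\wedge1)^{2-\alpha}$ is fine, so the claimed estimate follows as stated.
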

\begin{proof}[Proof of Lemma~\ref{spatial holder}]
Apply Lemma~\ref{fourier mode} to $\mathcal{A}_\alpha$,
\begin{align*}
    \mathcal{A}_\alpha=&\int_0^t\int_\R \hat{f}_\alpha(\xi)|\mathcal{F}G_{s}(x-\cdot)(\xi)-\mathcal{F}G_{s}(x'-\cdot)(\xi)|^2\ud \xi \ud s\\
    =&\int_0^t\int_\R \hat{f}_\alpha(\xi)\left| \left(e^{-i\xi x}-e^{-i\xi x'}\right) \left(\frac{\sin(s|\xi|)}{|\xi|}\right)\right|^2\ud \xi \ud s\\
    =&\int_0^t\int_\R \hat{f}_\alpha(\xi)\left| \left(e^{-i\xi (x-x')}-1\right) \left(\frac{\sin(s|\xi|)}{|\xi|}\right)\right|^2\ud \xi \ud s\\
    \leq&4\int_0^t\int_\R \hat{f}_\alpha(\xi) ((|x-x'|^{2\beta} |\xi|^{2\beta})\wedge 1)\frac{2(s^2\vee 1)}{1+|\xi|^2}\ud \xi \ud s\,.
\end{align*}
where the inequality holds since $|e^{ix}-1|\leq |x|\wedge 2\leq 2(|x|\wedge 1)^\beta\leq 2|x|^\beta $.
Then, on the one hand,
\begin{align*}
    \mathcal{A}_\alpha\leq 4\int_0^t 2(s^2\vee 1)\ud s\cdot\int_\R  \frac{\hat{f}_\alpha(\xi)}{1+|\xi|^2}\ud \xi \leq C_T\,.
\end{align*}
On the other hand,
\begin{align*}
    \mathcal{A}_\alpha \leq&4\int_0^t\int_\R \hat{f}_\alpha(\xi) |x-x'|^{2\beta} \frac{|\xi|^{2\beta}}{(1+|\xi|^2)^\beta}\frac{2(s^2\vee 1)}{(1+|\xi|^2)^{1-\beta}}\ud \xi \ud s\\
    \leq & 8|x-x'|^{2\beta}\int_0^t (s^2\vee 1)\ud s\cdot\int_\R \frac{\hat{f}_\alpha(\xi)}{(1+|\xi|^2)^{1-\beta}}\ud \xi\\
    \leq &C_T|x-x'|^{2\beta}\,.
\end{align*}
\end{proof}
We can prove Lemma~\ref{time holder} in the same method.
\begin{proof}[Proof of Lemma~\ref{time holder}]
\begin{align*}
    \mathcal{B}_\alpha=&\int_0^t\int_\R \hat{f}_\alpha(\xi)\left|\mathcal{F}G_{s+(t'-t)}(x-\cdot)(\xi)-\mathcal{F}G_s(x-\cdot)(\xi)\right|^2\ud \xi \ud s\\
    =&\int_0^t\int_\R \hat{f}_\alpha(\xi)\left|\frac{\sin((s+t'-t)|\xi|)}{|\xi|}-\frac{\sin(s|\xi|)}{|\xi|}\right|^2\ud \xi \ud s\\
    \leq & \int_0^t\int_\R \hat{f}_\alpha(\xi)\left|\frac{2\cos\left(\frac{(2s+t'-t)|\xi|}{2}\right)\sin\left(\frac{(t'-t)|\xi|}{2}\right)}{|\xi|}\right|^2\ud \xi \ud s\,.
\end{align*}
On the one hand, by applying Lemma~\ref{esitimation of the Fourier wave kernel},
\begin{equation}\label{B alpha 1}
    \begin{aligned}
        \mathcal{B}_\alpha\leq \int_0^t\int_\R \hat{f}_\alpha(\xi)\frac{2}{1+|\xi|^2}\ud \xi \ud s\leq C\,.
    \end{aligned}
\end{equation}
On the other hand, note that $\sin{x}\leq (x\wedge 1)\leq x^\beta $,
\begin{equation}\label{B alpha 2}
    \begin{aligned}
        \mathcal{B}_\alpha\leq& \int_0^t\int_{|\xi|\leq 1} \frac{\hat{f}_\alpha(\xi)}{(1+|\xi|^2)^{1-\beta}}\frac{(1+|\xi|^2)^{1-\beta}}{2^{1-\beta}}\frac{|t'-t|^2|\xi|^2}{|\xi|^2}2^{1-\beta}\ud \xi \ud s\\
    &+\int_0^t\int_{|\xi|> 1} \frac{\hat{f}_\alpha(\xi)}{(1+|\xi|^2)^{1-\beta}}\frac{1+|\xi|^2}{(1+|\xi|^2)^{\beta}}\frac{|t'-t|^{2\beta}|\xi|^{2\beta}}{{|\xi|^2}}\ud \xi \ud s\\
    \leq & C|t'-t|^2\int_0^t\int_{|\xi|> 1} \frac{\hat{f}_\alpha(\xi)}{(1+|\xi|^2)^{1-\beta}}\ud \xi \ud s\\
    &+C|t'-t|^{2\beta}\int_0^t\int_{|\xi|\leq 1} \frac{\hat{f}_\alpha(\xi)}{(1+|\xi|^2)^{1-\beta}}\left(\frac{(1+|\xi|^2)}{|\xi|^{2}}\right)^{1-\beta}\ud \xi \ud s\\
    \leq &C(|t'-t|^2\vee|t'-t|^{2\beta})\int_0^t\int_\R \frac{\hat{f}_\alpha(\xi)}{(1+|\xi|^2)^{1-\beta}}\ud \xi \ud s\\
    \leq & C_T(|t'-t|^2\vee|t'-t|^{2\beta})\,.
    \end{aligned}
\end{equation}
Therefore, a combination of \eqref{B alpha 1} and \eqref{B alpha 2} shows
\begin{align*}
    \mathcal{B}_\alpha\leq C[(|t'-t|^2\vee|t'-t|^{2\beta})\wedge 1]\leq C(|t'-t|^{2\beta}\wedge 1)\leq C|t'-t|^{2\beta}\,,
\end{align*}
which completes the proof.
\end{proof}
\section*{Acknowledgments}
The author would like to sincerely thank his Ph.D. supervisor, Jingyu Huang, for his valuable feedback and insightful comments on this paper. The author also extends his gratitude to the anonymous referee for the careful review of the manuscript and for providing thoughtful suggestions and constructive comments.
 

\printbibliography[title={References}]

\end{document}